\numberwithin{equation}{section}
\numberwithin{figure}{section}
\newtheorem*{theorem*}{Theorem}
\newtheorem{theorem}{Theorem}[section]
\newtheorem{lemma}[theorem]{Lemma}
\newtheorem{claim}[theorem]{Claim}
\newtheorem{corollary}[theorem]{Corollary}
\theoremstyle{definition}
\newtheorem{remark}[theorem]{Remark}
\newtheorem{examples}[theorem]{Examples}
\def\Z{\ensuremath{\mathbb{Z}}}
\def\R{\ensuremath{\mathbb{R}}}
\def\C{\ensuremath{\mathbb{C}}}
\def\H{\ensuremath{\mathbb{H}}}
\newcommand{\pa}[1]{\left(#1\right)}
\newcommand{\cpa}[1]{\left\{#1\right\}}
\newcommand{\tn}[1]{\textnormal{#1}}
\newcommand{\br}[1]{\left[#1\right]}
\newcommand{\fg}[1]{\left\langle #1\right\rangle}
\newcommand{\im}[1]{\tn{Im} \, #1}
\newcommand{\Int}[1]{\tn{Int} \, #1}
\newcommand{\p}[1]{\pi_1 \pa{#1}}
\newcommand{\card}[1]{\left| #1 \right|}
\newcommand{\cs}{\mathbin{\#}}
\newcommand{\cp}{\C P^2}
\newcommand{\cpb}{\overline{\C P^2}}
\font\cuf=cmtt8
\newcommand{\curl}[1]{{\cuf #1}}
\begin{document}
\title{Artin presentations, triangle groups, and 4-manifolds}

\author[J.~Calcut]{Jack S. Calcut$^\ast$}
\address{Department of Mathematics\\
         Oberlin College\\
         Oberlin, OH 44074}
\email{jcalcut@oberlin.edu}
\urladdr{\href{https://www2.oberlin.edu/faculty/jcalcut/}{\curl{https://www2.oberlin.edu/faculty/jcalcut/}}}

\author[J.~Li]{Jun Li}
\email{junli530@stanford.edu}

\makeatletter
\@namedef{subjclassname@2020}{%
  \textup{2020} Mathematics Subject Classification}
\makeatother

\keywords{Artin presentation, pure braid, fundamental group, 3-manifold, 4-manifold, triangle group, mapping class group.}
\subjclass[2020]{Primary 57M05; Secondary 57K40 and 20F36.}
\date{\today}

\begin{abstract}
Gonz{\'a}lez-Acu{\~n}a showed that Artin presentations characterize closed, orientable $3$-manifold groups.
Winkelnkemper later discovered that each Artin presentation determines a smooth, compact, simply-connected $4$-manifold.
We utilize triangle groups to find all Artin presentations on two generators that present the trivial group. We then determine all smooth, closed, simply-connected $4$-manifolds with second betti number at most two that appear in Artin presentation theory.
\end{abstract}

\dedicatory{Dedicated to the memory of Elmar Winkelnkemper.}

\maketitle

\section{Introduction}

An \emph{Artin presentation} is a group presentation $r=\fg{x_1,x_2,\ldots,x_n \mid r_1,r_2,\ldots,r_n}$ such that the following holds in the free group $F_n=\fg{x_1,x_2,\ldots,x_n}$
\[
x_1 x_2 \cdots x_n = (r_1^{-1}x_1 r_1)(r_2 ^{-1} x_2 r_2)\cdots (r_n^{-1} x_n r_n)
\]
Gonz{\'a}lez-Acu{\~n}a~\cite[Thm.~4]{ga} showed that every closed, orientable $3$-manifold admits an open book decomposition with planar page. As a corollary, he obtained the following algebraic characterization of $3$-manifold groups.

\begin{theorem*}[Gonz{\'a}lez-Acu{\~n}a~{{\cite[Thm.~6]{ga}}}]
A group $G$ is the fundamental group of a closed, orientable $3$-manifold if and only if $G$ admits an Artin presentation $r$ for some $n$.
\end{theorem*}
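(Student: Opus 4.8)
The plan is to prove the two implications separately. The ``only if'' direction will be extracted as a corollary of the planar open book theorem \cite[Thm.~4]{ga} quoted above; the ``if'' direction requires building a closed $3$-manifold by hand from the presentation. For the ``if'' direction, start with an Artin presentation $r=\fg{x_1,\ldots,x_n\mid r_1,\ldots,r_n}$ and consider the endomorphism $\phi_r$ of $F_n$ given by $x_i\mapsto r_i^{-1}x_i r_i$. The defining relation of an Artin presentation says exactly that $\phi_r$ fixes the word $w=x_1x_2\cdots x_n$, and since $\phi_r$ also sends each $x_i$ to a conjugate of $x_i$, Artin's theorem (an endomorphism of $F_n$ fixing $w$ and carrying each $x_i$ to a conjugate of some $x_{\pi(i)}$ is induced by a braid, hence is an automorphism) shows that $\phi_r$ is an automorphism realized by a pure braid. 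Pushing the marked points out to small boundary circles, this braid is represented by a homeomorphism $h_r$ of the planar surface $P_n$ --- the disk with $n$ open subdisks removed, with $\p{P_n}=F_n$ generated by loops $x_i$ around the holes and with outer boundary word $w$ --- that restricts to the identity on $\partial P_n$. Let $M(r)$ be the closed $3$-manifold obtained from the mapping torus of $h_r$ by attaching a solid torus to each of its $n+1$ boundary tori along the open book framing, so that the meridian disk of each filling torus is swept out by the mapping torus direction; since $h_r$ fixes $\partial P_n$ it is orientation preserving, so $M(r)$ is a smooth, closed, orientable $3$-manifold.

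Next I would compute $\p{M(r)}$ by van Kampen. The mapping torus of $h_r$ has fundamental group $\fg{x_1,\ldots,x_n,t\mid t x_i t^{-1}=r_i^{-1}x_i r_i}$; the vertical lift of $t$ along the outer boundary torus can be taken to be $t$ itself (since $\phi_r$ fixes $w$), while along the torus over the $i$th hole it can be taken to be $r_i t$ (since $\pa{r_i t}x_i\pa{r_i t}^{-1}=x_i$). Attaching the solid tori kills precisely these meridians, imposing $t=1$ and $r_i t=1$, hence $r_i=1$, so $\p{M(r)}=\fg{x_1,\ldots,x_n\mid r_1,\ldots,r_n}$, the group presented by $r$. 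For the ``only if'' direction, let $G=\p{M}$ with $M$ closed and orientable. By \cite[Thm.~4]{ga}, $M$ carries an open book decomposition with planar page $P_n$ for some $n$ and monodromy $h$ equal to the identity on $\partial P_n$. Then $h_*$ is an automorphism of $F_n=\p{P_n}$ fixing the outer boundary word $w$ and sending each hole loop $x_i$ to a conjugate, which we may write as $r_i^{-1}x_i r_i$; substituting, $w=h_*(w)=\pa{r_1^{-1}x_1 r_1}\cdots\pa{r_n^{-1}x_n r_n}$, which is exactly the Artin presentation condition for $r=\fg{x_1,\ldots,x_n\mid r_1,\ldots,r_n}$. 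The van Kampen computation above, applied verbatim to this open book, identifies $G=\p{M}$ with the group presented by $r$, completing the equivalence.

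The one genuinely deep input, the existence of a planar open book decomposition on every closed orientable $3$-manifold, I am taking as given from \cite[Thm.~4]{ga}. What then remains to be careful about is twofold: first, invoking Artin's theorem correctly to upgrade the purely algebraic Artin condition on $r$ to an honest homeomorphism of $P_n$ fixing the boundary pointwise (the point being that fixing $w$ and permuting conjugacy classes of generators is not just necessary but sufficient for geometricity); and second, the van Kampen bookkeeping --- in particular pinning down which curve on each boundary torus of the mapping torus is the meridian of the open book filling, since a sign or framing error there would change the relators $r_i$ and break the identification with $\pi_1$. I expect this meridian/framing identification to be the most delicate point to write up carefully.
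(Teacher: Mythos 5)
Your overall route is the standard one and matches the machinery this paper develops in Section~2 (realize the presentation by a pure braid homeomorphism of the holed disk via Artin's theorem, form the open book, compute $\pi_1$ by van Kampen; quote the planar open book theorem for the converse). But there is a genuine gap at exactly the point you flag, and it is not merely a framing convention to be checked: Artin's theorem hands you a homeomorphism $h_r$ realizing the \emph{automorphism} $\phi_r$, and $\phi_r$ does not determine the presentation $r$. Since $x_i^{a}r_i$ conjugates $x_i$ exactly as $r_i$ does, the presentations with relators $r_i$ and $x_i^{a_i}r_i$ induce the same automorphism of $F_n$, yet they present different groups and yield different $3$-manifolds --- already for $n=1$, $\fg{x_1\mid x_1}$ and $\fg{x_1\mid 1}$ both induce the identity automorphism but give $S^3$ and $S^1\times S^2$. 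Consequently, the meridian of the filling torus over the $i$th hole for \emph{your} $h_r$ is $r_i't$ where $r_i'$ is the relator read off geometrically from the arc $h_r(\overline{s_i})$, and a priori $r_i'=x_i^{a_i}r_i$ for some uncontrolled integers $a_i$; your van Kampen computation then produces $\fg{x_1,\ldots,x_n\mid x_1^{a_1}r_1,\ldots,x_n^{a_n}r_n}$ rather than $\pi(r)$. Your stated justification for taking the meridian to be $r_it$ --- that $r_it$ commutes with $x_i$ --- is necessary but not sufficient, since every $x_i^{a}r_it$ also commutes with $x_i$.

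The repair is the content of Claim~\ref{psisurjective}: since commuting elements of a free group are powers of a common word~\cite[p.~42]{mks}, the discrepancy between the target relators $r_i$ and the geometrically read-off relators $r_i'$ is exactly $r_i=x_i^{a_i}r_i'$ for integers $a_i$; composing $h_r$ with the boundary-parallel Dehn twists $T_i^{a_i}$ (which act trivially on $\pi_1(\Omega_n)$ but multiply the $i$th relator by $x_i^{a_i}$) yields a homeomorphism whose associated relators, and hence whose filling meridians, are precisely the $r_i$. The same ambiguity is latent in your ``only if'' direction, where you write $h_*(x_i)=r_i^{-1}x_ir_i$ and treat the $r_i$ as determined by $h_*$; there it is resolved more cheaply by \emph{defining} $r_i$ geometrically from the arcs, $r_i=\br{s_i(h\circ\overline{s_i})}$, as in Claim~\ref{hsharpxi}, after which the meridian identification is forced rather than chosen. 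With these two adjustments your argument goes through and coincides with Gonz\'alez-Acu\~na's.
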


Winkelnkemper~\cite[p.~250]{winkelnkemper} discovered that each Artin presentation $r$ determines not only a closed, orientable $3$-manifold $M^3(r)$ but also a smooth, compact, simply-connected $4$-manifold $W^4(r)$ such that
$\partial W^4(r)=M^3(r)$. All intersection forms are represented by some $W^4(r)$~\cite[pp.~248--250]{winkelnkemper}.
If $M^3(r)$ is the $3$-sphere, then we consider the smooth, \emph{closed}, simply-connected $4$-manifold $X^4(r)=W^4(r) \cup_{\partial} D^4$ obtained from $W^4(r)$ by closing up with a $4$-handle.\\

While all closed, orientable $3$-manifolds appear in Artin presentation theory,
it is unknown which $4$-manifolds appear as a $W^4(r)$ or an $X^4(r)$.
The only contractible manifold $W^4(r)$ is $D^4$ (when $r=\fg{\mid}$ is the empty Artin presentation).
So, no Mazur manifold appears as a $W^4(r)$.
Nevertheless, there are no known smooth, closed, simply-connected $4$-manifolds that do not appear as an $X^4(r)$;
many interesting closed $4$-manifolds are known to appear this way including all elliptic surfaces $E(n)$ where $E(2)$ is diffeomorphic to the Kummer surface $K3$~\cite{cw}.\\

We determine all closed $4$-manifolds $X^4(r)$ where $r$ is an Artin presentation on two generators.
(For $n=0$ and $n=1$, the problem is straightforward: only $S^4$, $\cp$, and $\cpb$ appear.)
Theorem~\ref{fms} gives the complete list of these manifolds:
$\cp\cs\cp$, $\cp\cs\cpb$, $\cpb\cs\cpb$, and $S^2\times S^2$.
Exotic simply-connected, closed $4$-manifolds are currently not known to exist with second betti number $\leq2$.
Theorem~\ref{fms} shows that such manifolds, whether or not they exist in general, do not appear in
Artin presentation theory.
Exotic $4$-manifolds do appear in Artin presentation theory with second betti number $\geq 10$~\cite{calcutjuggle}.
We conjecture that closed, exotic $4$-manifolds appear in Artin presentation theory with second betti number three,
and that this relates to the Torelli subgroup in Artin presentation theory (see~\cite[p.~250]{winkelnkemper}
and~\cite{calcutjuggle}).\\

Our proof of Theorem~\ref{fms} uses the classification of Artin presentations on two generators and
properties of classical triangle groups to find all Artin presentations on two generators that
present the trivial group. We then introduce a move on Artin presentations on two generators
which preserves the $4$-manifolds. Using this move and the Kirby calculus, we then identify the $4$-manifolds.
It would be interesting to find other such moves in Artin presentation theory.
Armas-Sanabria has shown certain three generator Artin presentations present nontrivial groups~\cite{lorena}.\\

For each nonnegative integer $n$, let $\mathcal{R}_n$ denote the set of Artin presentations on $n$ generators.
We include a proof of the folklore theorem (see~\cite[p.~245]{winkelnkemper} and~\cite[p.~10]{ga})
that $\mathcal{R}_n$ is a group isomorphic to the product $P_n\times\Z^n$ of the
pure braid group $P_n$ with the rank $n$ free abelian group.\\

Throughout, $X \approx Y$ means that $X$ is orientation preserving diffeomorphic to $Y$.
If $X$ is an oriented manifold, then $\overline{X}$ denotes the same manifold with the opposite orientation.

\section*{Acknowledgement}

The authors thank both referees for their helpful comments.

\section{Artin Presentations}

In this section, we review fundamental properties of Artin presentations and fix notation.
We begin by recalling how each Artin presentation arises naturally
from a homeomorphism of a compact $2$-disk with holes.\\

Let $\Omega_n$ denote the compact $2$-disk with $n$ holes as in Figure~\ref{fig:omegan}.
\begin{figure}[htbp!]
    \centerline{\includegraphics[scale=1.0]{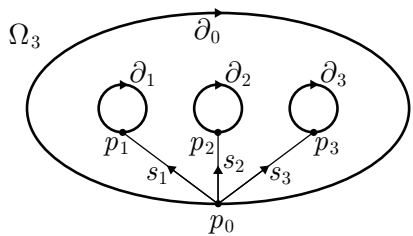}}
    \caption{Compact $2$-disk with $n$ holes denoted $\Omega_n$ (the case $n=3$ is depicted).}
\label{fig:omegan}
\end{figure}
The boundary components $\partial_0,\partial_1,\ldots,\partial_n$ of $\Omega_n$ are parameterized clockwise
and are based at $p_0,p_1,\ldots,p_n$ respectively.
For each $1\leq i \leq n$, let $s_i$ be an oriented segment from $p_0$ to $p_i$ as in Figure~\ref{fig:omegan}.
Given a path $\alpha$, let $\overline{\alpha}$ denote the reverse path
and let $\br{\alpha}$ denote the path homotopy class of $\alpha$.
Concatenation of paths---performed left to right---and the induced operation on classes will be denoted by juxtaposition.
For each $1\leq i \leq n$, let $x_i=\br{s_i \partial_i \overline{s_i}}$ as in Figure~\ref{fig:omegangen}.
\begin{figure}[htbp!]
    \centerline{\includegraphics[scale=1.0]{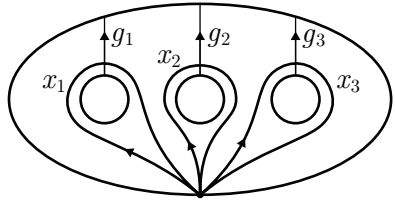}}
    \caption{Generators $x_i$ of $\p{\Omega_3,p_0}$.}
\label{fig:omegangen}
\end{figure}
So, $\p{\Omega_n,p_0}\cong F_n=\fg{x_1,x_2,\ldots,x_n}$ is free of rank $n$.\\

Let $h:\Omega_n \to \Omega_n$ be a homeomorphism that equals the identity (point-wise)
on the boundary of $\Omega_n$.
Then, the induced homomorphism $h_{\sharp}: \p{\Omega_n,p_0} \to \p{\Omega_n,p_0}$ is an automorphism.
For each $1\leq i \leq n$, define $r_i=\br{s_i (h\circ\overline{s_i})} \in F_n$.
Define the presentation $r=r(h)=\fg{x_1,x_2,\ldots,x_n \mid r_1,r_2,\ldots,r_n}$.

\begin{claim}\label{hsharpxi}
For each $1\leq i \leq n$, we have $h_{\sharp}(x_i)=r_i^{-1} x_i r_i$.
\end{claim}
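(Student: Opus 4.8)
The plan is to compute $h_\sharp(x_i)$ directly from the definitions. Recall $x_i = \br{s_i \partial_i \overline{s_i}}$ and $r_i = \br{s_i (h\circ\overline{s_i})}$. Since $h$ fixes $\partial\Omega_n$ pointwise, in particular $h$ carries the loop $\partial_i$ (based at $p_i$) to itself, and $h$ fixes the basepoint $p_0$. Applying $h$ to a representative path for $x_i$, we get $h_\sharp(x_i) = \br{(h\circ s_i)(h\circ\partial_i)(h\circ\overline{s_i})} = \br{(h\circ s_i)\,\partial_i\,(h\circ\overline{s_i})}$, using that $h\circ\partial_i = \partial_i$ as paths.

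Next I would insert the trivial loop $\overline{s_i}\,s_i$ (path-homotopic to the constant path at $p_0$) at the start, and its reverse at the end, writing
\[
h_\sharp(x_i) = \br{(h\circ s_i)\overline{s_i}}\cdot\br{s_i\,\partial_i\,\overline{s_i}}\cdot\br{s_i(h\circ\overline{s_i})}.
\]
The middle factor is exactly $x_i$. The third factor is $r_i$ by definition. The first factor is the reverse of the path $s_i(h\circ\overline{s_i})$, hence equals $\br{s_i(h\circ\overline{s_i})}^{-1} = r_i^{-1}$; here one uses that reversal of a concatenation reverses the order and inverts, and that $\overline{\overline{s_i}} = s_i$, $\overline{h\circ\overline{s_i}} = h\circ s_i$. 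Combining the three factors gives $h_\sharp(x_i) = r_i^{-1} x_i r_i$, as claimed.

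The argument is essentially bookkeeping with path classes, so there is no serious obstacle; the one point requiring care is making sure all concatenations are well-defined (the endpoints match: $s_i$ runs $p_0\to p_i$, $h\circ s_i$ runs $p_0\to p_i$ since $h(p_0)=p_0$ and $h(p_i)=p_i$, and $\partial_i$ is a loop at $p_i$) and that the loop $\overline{s_i}\,s_i$ really is inserted only where it is path-homotopic to a constant. It may be worth drawing or referencing Figure~\ref{fig:omegangen} to make the cancellation transparent.
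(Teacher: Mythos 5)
Your proof is correct and follows essentially the same route as the paper's: apply $h$, use $h\circ\partial_i=\partial_i$, and insert the nullhomotopic loop $\overline{s_i}\,s_i$ on either side of $\partial_i$ to regroup the product as $r_i^{-1}x_i r_i$. The only nitpick is that $\overline{s_i}\,s_i$ is a loop based at $p_i$, not $p_0$, but this does not affect the argument.
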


\begin{proof}
Note that $h\circ\overline{s_i}=\overline{h\circ s_i}$, $r_i^{-1}=\br{(h\circ s_i)\overline{s_i}}$,
and $h\circ\partial_i = \partial_i$.
Therefore
\begin{align*}
h_{\sharp}(x_i) &= \br{h\circ (s_i\partial_i \overline{s_i})} =
\br{h\circ s_i}\br{h\circ\partial_i}\br{h\circ\overline{s_i}}\\
								&= \br{h\circ s_i}\br{\overline{s_i}}\br{s_i}\br{\partial_i}
								\br{\overline{s_i}}\br{s_i}\br{h\circ\overline{s_i}}\\
								&=r_i^{-1} x_i r_i
\end{align*}
\end{proof}

\begin{claim}\label{hgivesartin}
The presentation $r=r(h)$ determined by $h$ is an Artin presentation.
\end{claim}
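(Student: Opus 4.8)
The plan is to combine Claim~\ref{hsharpxi} with the single geometric observation that the element $x_1 x_2 \cdots x_n$ of $F_n$ is represented by the outer boundary loop of $\Omega_n$, which $h$ leaves fixed. First I would record the algebraic consequence of Claim~\ref{hsharpxi}: since $h_{\sharp}$ is a homomorphism and $h_{\sharp}(x_i) = r_i^{-1} x_i r_i$ for every $i$,
\[
h_{\sharp}(x_1 x_2 \cdots x_n) = \pa{r_1^{-1} x_1 r_1}\pa{r_2^{-1} x_2 r_2}\cdots\pa{r_n^{-1} x_n r_n}
\]
in $F_n$. Hence, to show that $r = r(h)$ is an Artin presentation it suffices to prove that $h_{\sharp}$ fixes the product $x_1 x_2 \cdots x_n$.

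Next I would identify this product geometrically. Write $\partial_0$ also for the based loop traversing the outer boundary component clockwise, starting and ending at $p_0$. Pushing $\partial_0$ inward across $\Omega_n$ --- equivalently, using the deformation retraction of $\Omega_n$ onto the wedge of the loops $x_1,\ldots,x_n$ determined by the segments $s_1,\ldots,s_n$ --- and keeping track of the clockwise parameterizations and of the arrangement of the holes in Figure~\ref{fig:omegan}, one sees that $\br{\partial_0} = x_1 x_2 \cdots x_n$ in $\p{\Omega_n,p_0} \cong F_n$. This is the only point at which the specific conventions of Figure~\ref{fig:omegan} enter, and it is the step that demands the most care with orientations; in fact the precise word on the right (and even whether one gets $\br{\partial_0}$ or $\br{\partial_0}^{-1}$) is irrelevant for what follows --- all that matters is that $x_1 x_2 \cdots x_n$ is a power of the class of a boundary loop of $\Omega_n$.

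Finally, since $h$ is the identity on $\partial\Omega_n$, in particular $h \circ \partial_0 = \partial_0$ as a based loop, so $h_{\sharp}\br{\partial_0} = \br{\partial_0}$ and therefore $h_{\sharp}(x_1 x_2 \cdots x_n) = x_1 x_2 \cdots x_n$. Combining this with the first step gives
\[
\pa{r_1^{-1} x_1 r_1}\pa{r_2^{-1} x_2 r_2}\cdots\pa{r_n^{-1} x_n r_n} = h_{\sharp}(x_1 x_2 \cdots x_n) = x_1 x_2 \cdots x_n,
\]
which is precisely the identity defining an Artin presentation. The only genuine obstacle is the bookkeeping in the middle step; everything else is a formal computation with the homomorphism $h_{\sharp}$.
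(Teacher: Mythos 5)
Your proof is correct and is essentially the paper's own argument: both rest on the identity $\br{\partial_0}=x_1x_2\cdots x_n$ in $\p{\Omega_n,p_0}$, the fact that $h$ fixes $\partial_0$ so $h_{\sharp}$ fixes this product, and Claim~\ref{hsharpxi} to rewrite $h_{\sharp}(x_1x_2\cdots x_n)$ as $(r_1^{-1}x_1r_1)\cdots(r_n^{-1}x_nr_n)$. The only quibble is your aside that the precise word representing $\br{\partial_0}$ is irrelevant --- the Artin identity does require $\br{\partial_0}$ to be (a power of) $x_1x_2\cdots x_n$ in that order, which is exactly what the clockwise conventions of Figure~\ref{fig:omegan} guarantee --- but this does not affect the validity of your argument.
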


\begin{proof}
The following holds in $F_n$
\begin{align*}
x_1 x_2 \cdots x_n &= \br{\partial_0} = \br{h\circ\partial_0}\\
									 &= h_{\sharp}(x_1 x_2 \cdots x_n) = h_{\sharp}(x_1)h_{\sharp}(x_2)\cdots h_{\sharp}(x_n)\\
								   &=(r_1^{-1}x_1 r_1)(r_2 ^{-1} x_2 r_2)\cdots (r_n^{-1} x_n r_n)
\end{align*}
where the last equality used Claim~\ref{hsharpxi}.
\end{proof}

Let $\tn{Homeo}\pa{\Omega_n,\partial\Omega_n}$ denote the group of homeomorphisms
of $\Omega_n$ that equal the identity (point-wise) on $\partial\Omega_n$.
By Claim~\ref{hgivesartin}, we have a function
\[
\psi:\tn{Homeo}\pa{\Omega_n,\partial\Omega_n} \to \mathcal{R}_n
\]
given by $\psi\pa{h}=r(h)$.
If $h$ and $h'$ are isotopic relative to $\partial\Omega_n$, then $\psi(h)=\psi(h')$.
We will show that $\psi$ is a surjective homomorphism of groups.
First, we present a few examples.
Given an Artin presentation $r\in\mathcal{R}_n$, we define $\pi(r)$ to be the group presented by $r$
and $A(r)$ to be the exponent sum matrix of $r$ meaning
$\br{A(r)}_{ij}$ equals the exponent sum of $x_i$ in $r_j$.
Note that $A(r)$ is an $n\times n$ integer matrix,
the abelianization of $\pi(r)$ is isomorphic to $\Z^n/\tn{Im}A$ where $\tn{Im}A$
denotes the image of $A:\Z^n\to\Z^n$,
and $\pi(r)$ is perfect if and only if
$A(r)$ is unimodular (that is, $\det A=\pm1$).\\

\begin{examples}\label{APex}
\noindent
\begin{enumerate}[label=(\arabic*),leftmargin=*]\setcounter{enumi}{0}
\item The empty presentation $\varepsilon=\fg{\mid}$ is the unique Artin presentation in $\mathcal{R}_0$.
Here, $\pi(\varepsilon)$ is trivial and $A(r)=\br{}$ is empty.
\item The Artin presentations associated to $T_c$ and $T_c^{-1}$ in Figure~\ref{fig:dehn1}
are $\fg{x_1 \mid x_1}$ and $\fg{x_1 \mid x_1^{-1}}$ respectively.
\begin{figure}[htbp!]
    \centerline{\includegraphics[scale=1.0]{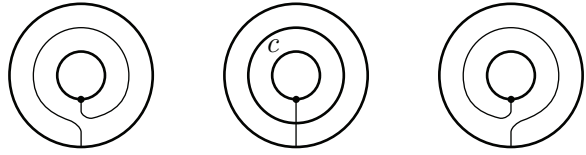}}
    \caption{Simple closed curve $c$ in $\Omega_1$ (center), result of right Dehn twist $T_c$ about $c$ (right), and result of left Dehn twist $T_c^{-1}$ about $c$ (left).}
\label{fig:dehn1}
\end{figure}
Given a homeomorphism $h\in \tn{Homeo}\pa{\Omega_n,\partial\Omega_n}$, one computes the associated
Artin presentation $r=\psi(h)$ as follows.
Recall Figures~\ref{fig:omegan} and~\ref{fig:omegangen}.
The relation $r_i$ is obtained by starting at $p_i$, following
$h(\overline{s_i})$, and recording $x_j$ (respectively $x_j^{-1}$) each time $g_j$ is crossed from left to right (respectively right to left).
\item Each $r\in\mathcal{R}_1$ has the form $r=\fg{x_1 \mid x_1^a}$ for some integer $a$.
Here, $\pi(r)\cong\Z/\card{a}\Z$ and $A(r)=\br{a}$.
\item Let $a_1,a_2,\ldots,a_n$ be any integers.
Then, $r=\fg{x_1,x_2,\ldots,x_n \mid x_1^{a_1},x_2^{a_2},\ldots,x_n^{a_n}}$ is an Artin presentation in $\mathcal{R}_n$ that presents the free product of cyclic groups $\Z/\card{a_i}\Z$.
\item The Artin presentations associated to $T_c$ and $T_c^{-1}$ in Figure~\ref{fig:dehn2}
are $\fg{x_1,x_2 \mid x_1 x_2, x_1 x_2}$ and $\fg{x_1,x_2 \mid x_2^{-1} x_1^{-1}, x_2^{-1} x_1^{-1}}$ respectively.
\begin{figure}[htbp!]
    \centerline{\includegraphics[scale=1.0]{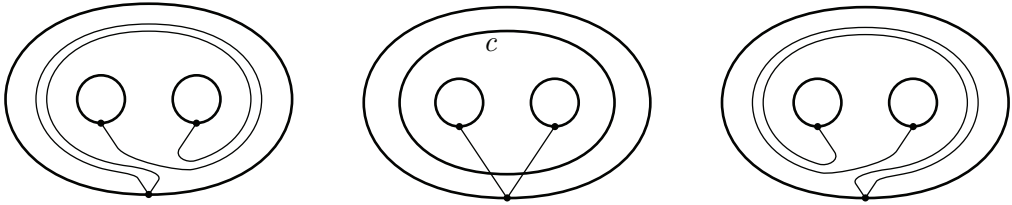}}
    \caption{Simple closed curve $c$ in $\Omega_2$ (center), result of right Dehn twist $T_c$ about $c$ (right), and result of left Dehn twist $T_c^{-1}$ about $c$ (left).}
\label{fig:dehn2}
\end{figure}
\item\label{charR2} Each $r=\fg{x_1,x_2 \mid r_1,r_2}\in\mathcal{R}_2$ has the form $r_1=x_1^{a-c}(x_1 x_2)^c$ and
$r_2=x_2^{b-c}(x_1 x_2)^c$ for some integers $a$, $b$, and $c$
(see~\cite[p.~360]{calcutalgebraic} and~\cite[p.~245]{winkelnkemper}).
We denote such an Artin presentation by $r(a,b,c)$, so
$A(r(a,b,c))=
\begin{bmatrix}
a & c\\
c & b
\end{bmatrix}$.
For instance, $r(-1,-3,2)$ presents the binary icosahedral group $I(120)$.
We mention that $\pi(r(a,b,c))\cong\pi(r(-a,-b,-c))$ by the map $x_i\mapsto x_i$.
\end{enumerate}
\end{examples}

\begin{remark}\label{operation}
For each $n$, $\mathcal{R}_n$ has a natural binary operation.
Namely, let $r,u\in\mathcal{R}_n$.
Define the composition $u\circ r$ to be the presentation
$t=\fg{x_1,x_2,\ldots,x_n \mid t_1,t_2,\ldots,t_n}$ as follows:
let $R_i$ be obtained by substituting $u_j^{-1} x_j u_j$ for $x_j$ in $r_i$,
and define $t_i=u_i R_i$.
We now show that $t\in\mathcal{R}_n$ \emph{provided} $r,u\in\im{\psi}$
(shortly, we will see that $\im{\psi}=\mathcal{R}_n$, which means this operation is defined on all of $\mathcal{R}_n$).
Let $h,k\in \tn{Homeo}\pa{\Omega_n,\partial\Omega_n}$ such that $\psi(h)=r$ and $\psi(k)=u$.
For each $1\leq j \leq n$, Claim~\ref{hsharpxi} implies that
$h_{\sharp}(x_j)=r_j^{-1} x_j r_j$ and $k_{\sharp}(x_j)=u_j^{-1} x_j u_j$.
So, for each $1\leq i \leq n$, we have
\begin{align*}
\pa{k\circ h}_{\sharp}(x_i) &= k_{\sharp}\pa{h_{\sharp}(x_i)}=k_{\sharp}\pa{r_i^{-1} x_i r_i}\\
									 &=k_{\sharp}(r_i)^{-1} u_i^{-1} x_i u_i k_{\sharp}(r_i) \\
								   &=\pa{u_i k_{\sharp}(r_i)}^{-1} x_i \pa{u_i k_{\sharp}(r_i)}\\
									&=\pa{u_i R_i}^{-1} x_i \pa{u_i R_i}\\
									&=t_i^{-1} x_i t_i
\end{align*}
Thus
\[
x_1 x_2 \cdots x_n 	= \pa{k\circ h}_{\sharp}(x_1 x_2 \cdots x_n)
										=(t_1^{-1}x_1 t_1)(t_2 ^{-1} x_2 t_2)\cdots (t_n^{-1} x_n t_n)
\]
and so $t=u\circ r \in\mathcal{R}_n$.
Summarizing, $\left.\psi\right|:\tn{Homeo}\pa{\Omega_n,\partial\Omega_n} \to \im{\psi}$ is surjective,
the domain is a group, the codomain is a set with a binary operation,
and $\psi$ respects the operations---meaning $\psi(k\circ h)=\psi(k)\circ\psi(h)$.
These facts imply that $\im{\psi}$ is a group and $\left.\psi\right|$ is a surjective homomorphism.
Note that each Artin presentation in the examples above lies in $\im{\psi}$.
The identity in $\im{\psi}$ is $\fg{x_1,x_2,\ldots,x_n \mid 1,1,\ldots,1}$.
\end{remark}

Let $D_n$ denote the compact $2$-disk with $n\geq0$ marked points $q_1,q_2,\ldots,q_n$
in its interior as in Figure~\ref{fig:D_n}.
\begin{figure}[htbp!]
    \centerline{\includegraphics[scale=1.0]{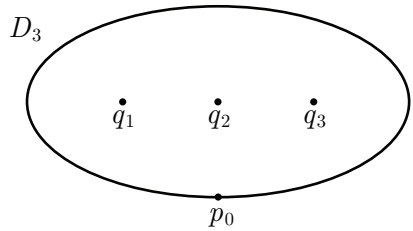}}
    \caption{Surface $D_n$ that is the compact $2$-disk $D^2$ with $n$ marked points $q_1,q_2,\ldots,q_n$
		in its interior (the case $n=3$ is depicted).}
\label{fig:D_n}
\end{figure}
We choose $q_i$ to lie at the center of the simple
closed curve $\partial_i$ and let $Q=\cpa{q_1,q_2,\ldots,q_n}$.
Let $D^2-Q$ be the $2$-disk with $n\geq0$ punctures.\\

\begin{claim}\label{psisurjective}
The function $\psi$ is surjective. Hence,
$\psi:\tn{Homeo}\pa{\Omega_n,\partial\Omega_n} \to \mathcal{R}_n$ is a surjective homomorphism of groups.
\end{claim}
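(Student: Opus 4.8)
The goal is to show that every Artin presentation $r = \fg{x_1,\ldots,x_n \mid r_1,\ldots,r_n} \in \mathcal{R}_n$ arises as $r(h)$ for some $h \in \tn{Homeo}(\Omega_n,\partial\Omega_n)$. The natural strategy is to pass through the mapping class group of the punctured disk. First I would recall the standard identification: $\tn{Homeo}(\Omega_n,\partial\Omega_n)$, up to isotopy rel boundary, is essentially the mapping class group $\tn{MCG}(D^2-Q)$ of the $n$-punctured disk, and the latter is well known to be isomorphic to the braid group $B_n$ (Artin). Collapsing each boundary circle $\partial_i$ of $\Omega_n$ to the marked point $q_i$ gives a map $\tn{Homeo}(\Omega_n,\partial\Omega_n) \to \tn{MCG}(D^2-Q)$; the kernel is generated by Dehn twists about the $\partial_i$, which correspond exactly to the ``trivial-braid'' directions. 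Then I would invoke the classical fact (the Artin representation) that $\tn{MCG}(D^2-Q) \cong B_n$ acts faithfully on $\pi_1(D^2-Q) \cong F_n$ by automorphisms that fix the product $x_1\cdots x_n$ and send each $x_i$ to a conjugate of some $x_{\sigma(i)}$; the image is precisely the group of such automorphisms.

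Concretely, given $r \in \mathcal{R}_n$, the defining Artin relation says the assignment $x_i \mapsto r_i^{-1} x_i r_i$ extends to an endomorphism $\varphi_r$ of $F_n$ fixing $x_1 \cdots x_n$. I would show $\varphi_r$ is actually an automorphism — this is where the Artin-representation machinery does the work: an endomorphism of $F_n$ sending each generator to a conjugate of itself and fixing the boundary word is necessarily one of the braid automorphisms, hence invertible. (Alternatively one can argue directly that the exponent-sum matrix considerations plus the conjugacy form force invertibility, but citing the known description of the image of the Artin representation is cleaner.) Having identified $\varphi_r$ with an element of $\tn{MCG}(D^2-Q)$, I lift it back to a homeomorphism $h \in \tn{Homeo}(\Omega_n,\partial\Omega_n)$ via the boundary-collapse map, choosing the lift that is the identity on $\partial_0$ and on each $s_i$ near $p_0$. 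By Claim~\ref{hsharpxi}, this $h$ satisfies $h_\sharp(x_i) = r(h)_i^{-1} x_i\, r(h)_i$, and since $h_\sharp = \varphi_r$ sends $x_i \mapsto r_i^{-1} x_i r_i$, comparing shows $r(h)_i$ and $r_i$ differ by an element centralizing $x_i$ in $F_n$, i.e.\ by a power of $x_i$; adjusting $h$ by Dehn twists about the $\partial_i$ (which lie in $\tn{Homeo}(\Omega_n,\partial\Omega_n)$ and only change the $r_i$ by powers of $x_i$, as Example~\ref{APex}(2) illustrates) I arrange $r(h) = r$ exactly. The final sentence of the claim then follows immediately from Remark~\ref{operation}, which already established that $\left.\psi\right|:\tn{Homeo}(\Omega_n,\partial\Omega_n)\to\im\psi$ is a surjective homomorphism of groups; surjectivity of $\psi$ upgrades $\im\psi$ to all of $\mathcal{R}_n$.

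The main obstacle is the step identifying $\varphi_r$ as a genuine mapping class — that is, realizing the purely algebraic automorphism of $F_n$ by an actual homeomorphism, and controlling the normalization so that the resulting $r(h)$ is the given $r$ on the nose rather than merely up to powers of $x_i$ in each relator. This is exactly the content of the Artin representation being an isomorphism onto the group of ``permutation-conjugacy'' automorphisms fixing the boundary word, together with a careful bookkeeping of the kernel of the boundary-collapse map $\Omega_n \to D_n$. I would isolate this as the one place a nontrivial classical theorem is used; everything else is the routine lift-and-adjust argument sketched above.
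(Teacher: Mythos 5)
Your proposal is correct and follows essentially the same route as the paper: invoke Artin's theorem to recognize $x_i\mapsto r_i^{-1}x_ir_i$ as a pure braid automorphism, realize it by a homeomorphism of the punctured disk restricted to $\Omega_n$, observe via Claim~\ref{hsharpxi} that the resulting relators differ from the given ones by powers of $x_i$ (centralizer argument in $F_n$), and correct by boundary-parallel Dehn twists before concluding with Remark~\ref{operation}. The only differences are cosmetic (the paper phrases the final correction as composition with $u=\fg{x_1,\ldots,x_n\mid x_1^{a_1},\ldots,x_n^{a_n}}$ in $\im\psi$ rather than as an adjustment of $h$).
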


\begin{proof}
Let $t\in\mathcal{R}_n$.
It suffices to prove that $t\in\im{\psi}$ since then the second conclusion follows from Remark~\ref{operation}.
Define the endomorphism $\beta:F_n\to F_n$ by $x_i\mapsto t_i^{-1} x_i t_i$.
By Artin\footnote{Artin's beautiful algebraic argument appeared originally in German
and later in English~\cite[pp.~114--115]{artin47};
Birman gave an exposition of Artin's argument in her book~\cite[Thm.~1.9, p.~30]{birman}.}~\cite[pp.~64--68]{artin},
the map $\beta$ is a pure braid group automorphism
of $F_n$.
So, there exists a homeomorphism $h'$ of $D^2-Q$ that is a product of homeomorphisms
corresponding to braid group generators such that: $h'$ is the identity on $\partial D^2$,
$h'$ sends each puncture to itself (by purity of $\beta$), and $h'_{\sharp}=\beta$.
Further, we can and do assume that the restriction $h$ of $h'$ to $\Omega_n$
equals the identity on $\partial\Omega_n$.
Hence, $h\in \tn{Homeo}\pa{\Omega_n,\partial\Omega_n}$ and $h_{\sharp}=\beta$.
Let $r=\psi(h)\in\mathcal{R}_n$.
For each $1\leq i\leq n$, $h_{\sharp}(x_i)=r_i^{-1} x_i r_i$ by Claim~\ref{hsharpxi}.
So, $t_i^{-1} x_i t_i=r_i^{-1} x_i r_i$ and $x_i\pa{t_i r_i^{-1}}=\pa{t_i r_i^{-1}}x_i$.
Commuting elements in $F_n$ are powers of the same word~\cite[p.~42]{mks}.
Thus, $t_i r_i^{-1}=x_i^{a_i}$ for some integer $a_i$, which implies $t_i=x_i^{a_i}r_i$.
Define $u=\fg{x_1,x_2,\ldots,x_n \mid x_1^{a_1},x_2^{a_2},\ldots,x_n^{a_n}}\in\mathcal{R}_n$.
By the examples above, there exists $k\in \tn{Homeo}\pa{\Omega_n,\partial\Omega_n}$
such that $\psi(k)=u$.
As $u_j^{-1}x_j u_j=x_j$ for each $1\leq j \leq n$, we have $t=u\circ r \in\im{\psi}$ as desired.
\end{proof}

\begin{remark}\label{Ahom}
Let $\tn{Mat}(n,\Z)$ be the additive group of $n\times n$ integer matrices.
The function $A:\mathcal{R}_n \to \tn{Mat}(n,\Z)$ is a homomorphism---meaning $A(u\circ r)=A(u)+A(r)$.
To see this, note that $u\circ r\in\mathcal{R}_n$ by Claim~\ref{psisurjective}.
By definition, $\br{A(u\circ r)}_{ij}$ equals the exponent sum of $x_i$ in $u_j R_j$
where $R_j$ is obtained from $r_j$ by substituting $u_k^{-1} x_k u_k$ for $x_k$.
Therefore, $\br{A(u\circ r)}_{ij}=\br{A(u)}_{ij}+\br{A(r)}_{ij}$ as desired.
\end{remark}

Let $\tn{Homeo}_0\pa{\Omega_n,\partial\Omega_n}$ be the subgroup of $\tn{Homeo}\pa{\Omega_n,\partial\Omega_n}$
consisting of homeomorphisms (fixed point-wise on $\partial\Omega_n$) that are
isotopic relative to $\partial\Omega_n$ to the identity.

\begin{claim}\label{kerpsi}
The kernel of $\psi$ equals $\tn{Homeo}_0\pa{\Omega_n,\partial\Omega_n}$.
\end{claim}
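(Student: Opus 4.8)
The plan is to prove the two inclusions separately. For $\tn{Homeo}_0\pa{\Omega_n,\partial\Omega_n}\subseteq\ker\psi$: if $h$ is isotopic to the identity rel $\partial\Omega_n$, then $\psi(h)=\psi(\tn{id})$ since $\psi$ depends only on the isotopy class rel $\partial\Omega_n$ (noted before Examples~\ref{APex}); and $r_i(\tn{id})=\br{s_i\,\overline{s_i}}=1$ in $F_n$ for each $i$, so $\psi(\tn{id})=\fg{x_1,\ldots,x_n\mid 1,\ldots,1}$, the identity of the group $\mathcal{R}_n$ by Remark~\ref{operation}. Hence $h\in\ker\psi$. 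All the content lies in the reverse inclusion, to which I now turn.

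Let $h\in\ker\psi$, so each relator satisfies $r_i=\br{s_i(h\circ\overline{s_i})}=\br{s_i\,\overline{h(s_i)}}=1$ in $F_n=\p{\Omega_n,p_0}$. Since $h$ fixes $\partial\Omega_n$ pointwise, $h(s_i)$ is a properly embedded arc from $p_0$ to $p_i$, and $r_i=1$ says precisely that $h(s_i)$ is homotopic to $s_i$ rel endpoints. I would then invoke the standard surface-topology fact that two pairwise-homotopic (rel $\partial$) ordered systems of pairwise-disjoint, properly embedded arcs in a compact surface are carried onto one another by an ambient isotopy rel boundary --- this goes back to Epstein's work on curves and isotopies and is the arc analogue of the change-of-coordinates principle for mapping class groups. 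Applied to the systems $\pa{h(s_1),\ldots,h(s_n)}$ and $\pa{s_1,\ldots,s_n}$, it produces $g\in\tn{Homeo}_0\pa{\Omega_n,\partial\Omega_n}$ with $g\pa{h(s_i)}=s_i$ for every $i$. Since $g\in\tn{Homeo}_0\pa{\Omega_n,\partial\Omega_n}\subseteq\ker\psi$ (the last inclusion being the one just proved, and $\ker\psi$ a subgroup by Claim~\ref{psisurjective}), replacing $h$ by $g\circ h$ preserves membership in each of these two subgroups; after one more isotopy rel $\partial\Omega_n$, supported near $s_1\cup\cdots\cup s_n$, to straighten the induced reparametrizations of the arcs, I may therefore assume $h$ restricts to the identity on $\partial\Omega_n\cup s_1\cup\cdots\cup s_n$.

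Next I would cut $\Omega_n$ open along $s_1,\ldots,s_n$. As each $s_i$ joins the outer boundary $\partial_0$ to the distinct hole boundary $\partial_i$, cutting along them one at a time merges boundary components without creating genus, so the result is a connected planar surface with a single boundary circle, i.e.\ a $2$-disk $P$; let $c\colon P\to\Omega_n$ be the gluing map, a homeomorphism away from $c^{-1}(s_1\cup\cdots\cup s_n)\subseteq\partial P$. Since $h$ is the identity on $\partial\Omega_n\cup s_1\cup\cdots\cup s_n$, it lifts through $c$ to a homeomorphism $\wt h\colon P\to P$ that is the identity on $\partial P$. By the Alexander trick, $\wt h$ is isotopic to $\tn{id}_P$ rel $\partial P$; being fixed on all of $\partial P$, this isotopy descends through $c$ to an isotopy rel $\partial\Omega_n$ from $h$ to $\tn{id}_{\Omega_n}$. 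Therefore $h\in\tn{Homeo}_0\pa{\Omega_n,\partial\Omega_n}$, which finishes the argument.

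The step I expect to be the main obstacle is the passage, within the reverse inclusion, from the algebraic hypothesis that $r_i=1$ for every $i$ --- which a priori furnishes only a separate rel-endpoint homotopy of each individual arc $h(s_i)$ onto $s_i$ --- to a single ambient isotopy of $\Omega_n$ carrying the entire configuration $h(s_1\cup\cdots\cup s_n)$ onto $s_1\cup\cdots\cup s_n$. I expect this to follow from the cited results on isotopies of arcs in surfaces, but it must be formulated for an ordered multi-arc system rather than one arc, and one must check that the isotopies can be taken rel $\partial\Omega_n$ so that they genuinely contribute elements of $\tn{Homeo}_0\pa{\Omega_n,\partial\Omega_n}$. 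Granting that, the remaining steps --- recognizing the cut-open surface as a disk, lifting $h$ to $\wt h$, and invoking the Alexander trick --- are routine.
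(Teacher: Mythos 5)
Your outline is sound and finishes exactly as the paper does (straighten $h$ on the arcs, cut $\Omega_n$ open along $s_1\cup\cdots\cup s_n$ to get a disk, apply the Alexander trick), but the route to the crucial intermediate step---isotoping $h$ rel $\partial\Omega_n$ until it fixes the arcs pointwise---is genuinely different, and it is also where your write-up is thinnest. The paper does this by hand: since each $r_i$ freely reduces to $1$, Lemma~\ref{diskinomegan} (an innermost-disk argument via the Schoenflies and Jordan curve theorems) cancels, pair by pair, the intersections of the arcs $h(\overline{s_i})$ with the dual segments $g_j$; once the images miss all the $g_j$, a Morse-theoretic argument (cancelling critical points of the height function and integrating suitable vector fields) carries $h(s_1)$ onto $s_1$, then $s_2$, and so on, without any appeal to outside machinery. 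You instead invoke an Epstein-type statement that pairwise-homotopic systems of disjoint arcs are ambiently isotopic rel boundary. Two caveats there. First, the arcs $s_1,\ldots,s_n$ are \emph{not} pairwise disjoint: they all emanate from the common basepoint $p_0\in\partial_0$, so the fact you cite does not apply verbatim; you need a version for an ordered arc system sharing a boundary endpoint, and you must also note that the order of the arc germs at $p_0$ is preserved (it is, since $h$ is orientation preserving and fixes $\partial_0$ pointwise, but this needs saying). Second, even for honestly disjoint systems, knowing each $h(s_i)$ is homotopic rel endpoints to $s_i$ does not immediately produce a single ambient isotopy rel $\partial\Omega_n$ matching the whole system: the inductive step requires that, after matching the first few arcs, the next pair remain homotopic in the cut-open surface. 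The statement you want is essentially the Alexander method of \cite{fm} rather than Epstein's single-arc theorem, and supplying a proof of it is precisely the content that the paper's Lemma~\ref{diskinomegan} plus the Morse-theoretic straightening furnish in elementary, self-contained form. With that multi-arc lemma correctly formulated and quoted (or proved), your argument is correct; as written, the obstacle you yourself flag is real, and it is exactly where the paper's work lives.
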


Before we prove Claim~\ref{kerpsi}, we need a technical lemma.
Incidentally, this lemma is the reason we may---and typically do---assume the
individual relations $r_i$ in each Artin presentation $r$ are freely reduced
(see also~\cite[pp.~363--365]{calcutalgebraic}).

\begin{lemma}\label{diskinomegan}
Let $h$ be a diffeomorphism of $\Omega_n$ fixed point-wise on $\partial \Omega_n$.
Assume that the arcs $h(\overline{s_i})$ meet the segments $g_j$ (see Figure~\ref{fig:omegangen}) in general position.
Suppose some $h(\overline{s_i})$ crosses some $g_j$ ($j=i$ is possible) at the point $a$ and then,
without crossing any other segment $g_k$,
$h(\overline{s_i})$ crosses $g_j$ in the opposite direction at the point $b$ as in Figure~\ref{fig:lemmafig}.
\begin{figure}[htbp!]
    \centerline{\includegraphics[scale=1.0]{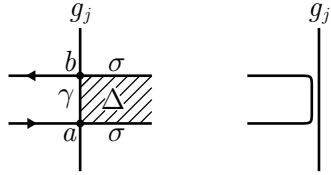}}
    \caption{Arc $h(\overline{s_i})$ meeting $g_j$ consecutively in opposite directions
		without meeting any segments $g_k$ in-between (left) and result of isotopy pushing $\sigma$ across $\Delta$ to a parallael copy of $\gamma$ (right).}
\label{fig:lemmafig}
\end{figure}
Then, there is an isotopy of $h$ relative to $\partial\Omega_n$ 
that eliminates the crossings $a$ and $b$.
This isotopy introduces no new crossings between the arcs $h(\overline{s_k})$ and $g_l$,
though it may eliminate other crossings (in case some $h(\overline{s_k})$ meets $\gamma$).
\end{lemma}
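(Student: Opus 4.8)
The plan is to run an innermost-disk (``bigon'') argument, where the two sides of the bigon are a sub-arc of $h(\overline{s_i})$ and a sub-arc of $g_j$.

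First I would produce a disk to push across. Write $\sigma$ for the sub-arc of $h(\overline{s_i})$ from $a$ to $b$; by hypothesis the interior of $\sigma$ is disjoint from $g_1\cup\cdots\cup g_n$. Recall that the segments $g_1,\ldots,g_n$ of Figure~\ref{fig:omegangen} cut $\Omega_n$ into a $2$-disk $P$ (this is how the recording rule of Examples~\ref{APex} is set up), with $\tn{int}\,P$ mapping homeomorphically onto $\Omega_n\setminus(g_1\cup\cdots\cup g_n)$ and with each $g_l$ appearing as two arcs in $\partial P$. Because the crossings of $h(\overline{s_i})$ with $g_j$ at $a$ and at $b$ point in opposite directions, $\sigma$ leaves $a$ and returns to $b$ on one and the same side of $g_j$; hence the lift $\widetilde\sigma\subset P$ of $\sigma$ has both endpoints on a single copy of $g_j$ in $\partial P$. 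An embedded arc in the disk $P$ with both endpoints on one boundary arc cuts off a sub-disk, and projecting this sub-disk down yields an embedded disk $\Delta\subset\Omega_n$ with $\partial\Delta=\sigma\cup\gamma$, where $\gamma\subseteq g_j$ is the sub-arc from $a$ to $b$, and with $\tn{int}\,\Delta$ disjoint from $\partial\Omega_n$ and from $g_1\cup\cdots\cup g_n$.

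Next I would arrange that $\tn{int}\,\Delta$ is disjoint from all of the arcs $h(\overline{s_k})$. Since $h$ is a diffeomorphism and the $s_k$ are disjoint except at $p_0$ (Figure~\ref{fig:omegan}), the arcs $h(\overline{s_k})$ are pairwise disjoint away from $p_0\in\partial\Omega_n$ and each is embedded; as $p_0\notin\Delta$, no $h(\overline{s_k})$ crosses $\sigma$. Therefore each component of $h(\overline{s_k})\cap\Delta$ is an embedded arc with both endpoints on $\gamma$, and an innermost such component cuts off a sub-disk $\Delta'\subseteq\Delta$, bounded by that component together with a sub-arc of $\tn{int}\,\gamma$, with $\tn{int}\,\Delta'$ disjoint from every $h(\overline{s_k})$ and from $\sigma$. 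Pushing that component across $\Delta'$ and just past $g_j$ is a standard finger move: it is realized by an ambient isotopy of $\Omega_n$ supported in a small neighborhood of $\Delta'$ (hence relative to $\partial\Omega_n$, and leaving $\sigma$, $a$, and $b$ untouched), it creates no crossings between the $h(\overline{s_k})$ and the $g_l$, and it strictly decreases the total number of such crossings. Iterating finitely often, I reduce to the case $\tn{int}\,\Delta\cap h(\overline{s_k})=\emptyset$ for all $k$; then no $h(\overline{s_k})$ meets $g_j$ along $\tn{int}\,\gamma$, since a transverse crossing there would enter $\tn{int}\,\Delta$.

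Finally I would push $\sigma$ itself across $\Delta$ as in Figure~\ref{fig:lemmafig}. Choose a regular neighborhood $N$ of $\Delta$ small enough to miss $\partial\Omega_n$, to miss $g_l$ for $l\neq j$, and (by the previous step) to miss $h(\overline{s_k})$ for every $k\neq i$; then $N$ meets $g_j$ in a single sub-arc containing $\gamma$. Let $\phi_t$ be an ambient isotopy of $\Omega_n$ supported in $N$ with $\phi_0=\tn{id}$ and $\phi_1(\sigma)$ a push-off of $\gamma$ lying just on the side of $g_j$ opposite $\Delta$ and meeting no $g_l$. Then $h'=\phi_1\circ h$ is a diffeomorphism of $\Omega_n$ fixing $\partial\Omega_n$ pointwise, the family $\phi_t\circ h$ is an isotopy from $h$ to $h'$ relative to $\partial\Omega_n$, and $h'(\overline{s_k})=h(\overline{s_k})$ for $k\neq i$, while $h'(\overline{s_i})$ agrees with $h(\overline{s_i})$ outside $N$ and inside $N$ runs alongside $\gamma$ on one side of $g_j$. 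Hence $h'(\overline{s_i})$ has exactly the crossings of $h(\overline{s_i})$ with $g_1\cup\cdots\cup g_n$ except that $a$ and $b$ are gone, and no new crossings appear, which is the assertion of the lemma.

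The main obstacle is the middle step: making $\tn{int}\,\Delta$ clean, and checking that each clean-up isotopy genuinely avoids $a$, $b$, and $\sigma$, is what makes the final ``no new crossings'' claim automatic rather than a delicate analysis of the support of the pushing isotopy. Producing $\Delta$ in the first step also requires care that $\sigma\cup\gamma$ really bounds a disk in $\Omega_n$ and not a subsurface containing holes; this is exactly the point at which the hypothesis that the two crossings point in opposite directions is used.
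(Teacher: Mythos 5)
Your proof is correct, but it reaches the key disk $\Delta$ by a genuinely different route than the paper and is more explicit about the final clause of the lemma. The paper forms the simple closed curve $C=\gamma\cup\sigma$ directly in $\Omega_n$, invokes the Schoenflies theorem to obtain a disk $\Delta\subseteq\Int{D^2}$, and then runs a Jordan Curve Theorem case analysis (separately for $k\neq j$ and $k=j$) to rule out any hole $\partial_k$ lying inside $\Delta$; that is where the hypothesis that $a$ and $b$ are consecutive, opposite-direction crossings enters. You instead cut $\Omega_n$ open along all the segments $g_l$ to get a disk $P$, observe that the opposite-direction hypothesis forces both endpoints of the lift $\widetilde{\sigma}$ onto a single copy of $g_j$ in $\partial P$, and take the sub-disk that arc cuts off. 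Your route buys a slightly stronger conclusion for free --- $\tn{int}\,\Delta$ is disjoint from every $g_l$, not merely from $\partial\Omega_n$ --- at the cost of using the fact (correct, given the configuration of Figure~\ref{fig:omegangen} and the recording rule of Examples~\ref{APex}) that the $g_l$ cut $\Omega_n$ into a disk. Your middle step, clearing $\tn{int}\,\Delta$ of the other arcs $h(\overline{s_k})$ by innermost sub-bigons before pushing $\sigma$ across, does not appear in the paper at all: the paper performs a single ambient push of $\sigma$ across a possibly non-clean $\Delta$ and absorbs the other arcs into the parenthetical ``it may eliminate other crossings.'' Your version makes the ``no new crossings'' assertion essentially automatic rather than implicit in the choice of pushing isotopy, which is a genuine gain in rigor; both arguments are sound.
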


\begin{proof}[Proof of Lemma~\ref{diskinomegan}]
Let $\gamma$ and $\sigma$ be the arcs between $a$ and $b$ in $g_j$ and $h(\overline{s_i})$ respectively.
So, $C=\gamma\cup\sigma$ is a simple closed curve in $\Int{\Omega_n}$.
By the Schoenflies theorem, $C$ bounds a $2$-disk $\Delta\subseteq \Int{D^2}$.
We show $\Delta$ is disjoint from $\partial\Omega_n$.
As $\Delta\subseteq\Int{D^2}$, $\Delta$ is disjoint from $\partial_0$.
Suppose, by way of contradiction, that $\Delta$ meets $\partial_k$ for some $1\leq k\leq n$.
Then, $\partial_k \subseteq \Int{\Delta}$ by connectedness.
If $k\neq j$, then the segment $g_k$ must meet $C$ by the Jordan Curve Theorem JCT.
Hence, $g_k$ meets $\sigma$ which contradicts the hypotheses on $h(\overline{s_i})$.
Next, assume $k=j$. 
Observe that the segment $g_j$ begins on $\partial_j$ in $\Int{\Delta}$.
Also, an open segment of $g_j$ ending at the lower point $a$ or $b$
($a$ in Figure~\ref{fig:lemmafig}) does not lie in $\Delta$.
Therefore, $g_j$ must meet $\sigma$ at a point other than $a$ or $b$ by the JCT.
Again, this contradicts the hypotheses on $h(\overline{s_i})$.
Therefore, $\Delta\cap\partial\Omega_n=\emptyset$ and $\Delta\subseteq \Int{\Omega_n}$.
The isotopy is now obtained by pushing $\sigma$ across $\Delta$
to a parallel copy of $\gamma$ as in Figure~\ref{fig:lemmafig}.
\end{proof}

\begin{proof}[Proof of Claim~\ref{kerpsi}]
Let $h\in\ker{\psi}$, and let $r=\psi(h)$.
By an isotopy of $h$ relative to $\partial \Omega_n$, we can and do assume $h$ is a diffeomorphism and
the arcs $h(\overline{s_i})$ meet the segments $g_j$ in general position.
As $h\in\ker{\psi}$, each $r_i=1$ which means $r_i$ freely reduces to $1$.
Let $x_j^{\pm1} x_j^{\mp1}$ be adjacent letters in some $r_i$ ($j=i$ is possible);
this corresponds to $h(\overline{s_i})$ crossing some $g_j$ and then,
without crossing any other segment $g_k$,
crossing $g_j$ in the opposite direction.
By Lemma~\ref{diskinomegan}, an isotopy of $h$ relative to $\partial\Omega_n$ eliminates these crossings.
Repeating this operation finitely many times, we may assume that the arcs
$h(\overline{s_i})$ are disjoint from the segments $g_j$.
By a small isotopy of $h$ relative to $\partial \Omega_n$,
we may assume height restricts to a Morse function on the arcs $h(s_i)$ with distinct critical values.
The minimal local maximum of height on $h(s_1)$ may be ambiently cancelled 
with an adjacent local minimum (see~\cite[pp.~1845--1852]{cks}, especially Figure~11).
Repeating this operation finitely many times, the arc $h(s_1)$ has strictly increasing height.
By integrating an appropriate horizontal vector field on $\Omega_n$, we may assume $h(s_1)=s_1$.
Integrating a vector field tangent to $s_1$, we may assume $h$ also equals the identity on $s_1$.
One may repeat this procedure on $s_2$---without disturbing $s_1$---and so forth.
Thus, we have $h$ is a diffeomorphism of $\Omega_n$ equal to the identity on $\partial \Omega_n$ and on the 
segments $s_i$ for $1\leq i \leq n$.
Cutting $\Omega_n$ open along the arcs $s_i$, $h$ becomes a homeomorphism of the $2$-disk equal to the identity on boundary. By Alexander's trick~\cite[47--48]{fm}, this homeomorphism is isotopic to the identity relative to boundary.
This latter isotopy induces an isotopy relative to $\partial\Omega_n$ of $h$ to the identity.
\end{proof}

The mapping class group of $\Omega_n$ is
\[
\tn{Mod}\pa{\Omega_n} = \tn{Homeo}\pa{\Omega_n,\partial\Omega_n} / \tn{Homeo}_0\pa{\Omega_n,\partial\Omega_n}
\]
For useful equivalent definitions of $\tn{Mod}\pa{\Omega_n}$, see~\cite[pp.~44--45]{fm}. 
Claims~\ref{psisurjective} and~\ref{kerpsi} immediately imply the following.

\begin{corollary}\label{ModRn}
The function $\tn{Mod}\pa{\Omega_n} \to \mathcal{R}_n$ given by $\br{h}\mapsto \psi(h)$ is an isomorphism.
\end{corollary}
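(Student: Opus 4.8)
The plan is to observe that Corollary~\ref{ModRn} is a formal consequence of the first isomorphism theorem for groups, with all the substantive work already carried out in Claims~\ref{psisurjective} and~\ref{kerpsi}.

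First I would record, by Claim~\ref{psisurjective}, that $\psi\colon\tn{Homeo}\pa{\Omega_n,\partial\Omega_n}\to\mathcal{R}_n$ is a surjective homomorphism of groups, and, by Claim~\ref{kerpsi}, that $\ker\psi=\tn{Homeo}_0\pa{\Omega_n,\partial\Omega_n}$. Since $\tn{Mod}\pa{\Omega_n}$ is \emph{defined} as the quotient $\tn{Homeo}\pa{\Omega_n,\partial\Omega_n}/\tn{Homeo}_0\pa{\Omega_n,\partial\Omega_n}$, the first isomorphism theorem produces a group isomorphism $\tn{Homeo}\pa{\Omega_n,\partial\Omega_n}/\ker\psi\to\mathcal{R}_n$. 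Then I would check that this isomorphism is exactly the assignment $\br{h}\mapsto\psi(h)$ of the statement: it is well-defined because two representatives $h,h'$ of the same mapping class satisfy $h'\circ h^{-1}\in\tn{Homeo}_0\pa{\Omega_n,\partial\Omega_n}=\ker\psi$, whence $\psi(h')=\psi(h'\circ h^{-1})\circ\psi(h)=\psi(h)$, using that $\psi$ respects composition (Remark~\ref{operation}) and that $\psi(h'\circ h^{-1})$ is the identity of $\mathcal{R}_n$; and it is a bijective homomorphism for precisely the reason the first isomorphism theorem applies.

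I do not expect any genuine obstacle in this argument: the real content---surjectivity of $\psi$, which rests on Artin's theorem identifying the relevant endomorphisms of $F_n$ as geometrically realized pure braid automorphisms, and the identification of $\ker\psi$, which rests on the disk-bounding Lemma~\ref{diskinomegan} together with an Alexander-trick straightening of the arcs $s_i$---has already been established in the preceding claims. What remains is only the routine bookkeeping of passing to the quotient, so the proof should be a single short paragraph citing Claims~\ref{psisurjective} and~\ref{kerpsi} and invoking the first isomorphism theorem.
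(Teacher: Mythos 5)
Your proposal is correct and matches the paper exactly: the paper gives no separate argument, stating only that Claims~\ref{psisurjective} and~\ref{kerpsi} immediately imply the corollary, which is precisely the first-isomorphism-theorem bookkeeping you spell out. Your additional check that the induced map is $\br{h}\mapsto\psi(h)$ and is well-defined is a harmless elaboration of the same route.
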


Recall from Figure~\ref{fig:D_n} above that $D_n$ is the compact $2$-disk with $n$ marked points in its interior.
The mapping class group of $D_n$ is
\[
\tn{Mod}\pa{D_n}=\tn{Homeo}\pa{D_n,\partial D_n} / \tn{Homeo}_0\pa{D_n,\partial D_n}
\]
where homeomorphisms and isotopies fix $\partial D_n = S^1$ point-wise and
the marked points may be permuted~\cite[p.~45]{fm}.
The pure mapping class group $\tn{PMod}\pa{D_n}$ is the subgroup of $\tn{Mod}\pa{D_n}$ that fixes each marked point individually~\cite[pp.~90]{fm}.
Let $B_n$ denote the classical $n$ strand braid group and $P_n$ denote the $n$ strand pure braid group.
There are canonical isomorphisms $B_n \cong \tn{Mod}\pa{D_n}$ and
$P_n \cong \tn{PMod}\pa{D_n}$~\cite[pp.~243--249]{fm}.
Recall that $D^2-Q$ is the $2$-disk with $n$ punctures.
One may regard marked points as punctures (see~\cite[p.~45]{fm}) in the sense that
there are canonical isomorphisms $\tn{Mod}\pa{D_n} \cong \tn{Mod}\pa{D^2-Q}$ and
$\tn{PMod}\pa{D_n} \cong \tn{PMod}\pa{D^2-Q}$.
\\

\begin{claim}\label{modPZ}
There is a canonical isomorphism $\tn{Mod}\pa{\Omega_n} \cong P_n \times \Z^n$.
\end{claim}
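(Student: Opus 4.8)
The plan is to transport the statement across the isomorphism $\tn{Mod}\pa{\Omega_n}\cong\mathcal{R}_n$ of Corollary~\ref{ModRn} and then exhibit an explicit isomorphism $\mathcal{R}_n\cong P_n\times\Z^n$ built from two homomorphisms out of $\mathcal{R}_n$: a \emph{pure braid part} $\Phi_1$ recording the automorphism of $F_n\cong\p{\Omega_n}$ induced on $\pi_1$, and a \emph{twisting part} $\Phi_2$ recording the diagonal of the exponent sum matrix.

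First I would set $\Phi_1:\mathcal{R}_n\to\tn{Aut}\pa{F_n}$ by $\Phi_1(r)=\beta_r$, where $\beta_r(x_i)=r_i^{-1}x_i r_i$. If $\psi(h)=r$ then $\beta_r=h_\sharp$ by Claim~\ref{hsharpxi}, so the facts $\psi(k\circ h)=\psi(k)\circ\psi(h)$ and $\pa{k\circ h}_\sharp=k_\sharp\circ h_\sharp$ recorded in Remark~\ref{operation} show that $\Phi_1$ is a homomorphism from $\pa{\mathcal{R}_n,\circ}$ to $\pa{\tn{Aut}\pa{F_n},\circ}$. Artin's theorem, as applied in the proof of Claim~\ref{psisurjective}, shows that $\im{\Phi_1}$ equals the image $\mathcal{P}_n$ of the faithful Artin representation $P_n\hookrightarrow\tn{Aut}\pa{F_n}$, so $\mathcal{P}_n\cong P_n$ canonically. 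The kernel of $\Phi_1$ consists of those $r$ for which each $r_i$ commutes with $x_i$, and since centralizers in a free group are cyclic~\cite[p.~42]{mks} this kernel is precisely
\[
K=\cpa{\,\fg{x_1,\ldots,x_n\mid x_1^{a_1},\ldots,x_n^{a_n}}\ :\ a_1,\ldots,a_n\in\Z\,};
\]
write $\fg{x_i\mid x_i^{a_i}}$ for a typical member. A direct computation with the composition law of Remark~\ref{operation} shows that $K$ is a subgroup of $\mathcal{R}_n$ on which composition adds exponents, so $\fg{x_i\mid x_i^{a_i}}\mapsto(a_1,\ldots,a_n)$ is an isomorphism $K\cong\Z^n$. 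Thus $1\to\Z^n\to\mathcal{R}_n\xrightarrow{\Phi_1}P_n\to1$ is exact.

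To split it, I would take $\Phi_2:\mathcal{R}_n\to\Z^n$ given by $\Phi_2(r)=\pa{\br{A(r)}_{11},\ldots,\br{A(r)}_{nn}}$, the diagonal of the exponent sum matrix; this is a homomorphism because $A$ is (Remark~\ref{Ahom}), and on $K$ it inverts the isomorphism $K\cong\Z^n$ above since $\br{A\pa{\fg{x_i\mid x_i^{a_i}}}}_{ii}=a_i$. Then $\Phi=\pa{\Phi_1,\Phi_2}:\mathcal{R}_n\to\mathcal{P}_n\times\Z^n$ is a homomorphism to the direct product, and it is bijective. It is injective, since $\Phi_1(r)=\tn{id}$ forces $r\in K$ and then $\Phi_2(r)=0$ forces $r$ to be the identity presentation $\fg{x_1,\ldots,x_n\mid 1,\ldots,1}$. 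It is surjective, since given $\pa{\phi,(b_1,\ldots,b_n)}$ I may pick $r$ with $\Phi_1(r)=\phi$ (surjectivity of $\Phi_1$), put $(a_1,\ldots,a_n)=\Phi_2(r)$, and replace $r$ by $u\circ r$ with $u=\fg{x_i\mid x_i^{b_i-a_i}}\in K$; this leaves the $\Phi_1$-component unchanged and moves the $\Phi_2$-component to $(b_1,\ldots,b_n)$. Composing $\Phi$ with $\mathcal{P}_n\cong P_n$ and Corollary~\ref{ModRn} yields the desired canonical isomorphism $\tn{Mod}\pa{\Omega_n}\cong P_n\times\Z^n$; in particular $K$ turns out to be central in $\mathcal{R}_n$, as the $\Z^n$ factor of a direct product.

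The only ingredient not already developed above is Artin's theorem, used to identify $\im{\Phi_1}=\mathcal{P}_n\cong P_n$, equivalently $\mathcal{R}_n/K\cong P_n$; everything else is bookkeeping with the composition law and the exponent sum matrix. The step I expect to require the most thought is the choice of splitting: the diagonal of $A$ works precisely because it is defined and homomorphic on all of $\mathcal{R}_n$ yet restricts to an isomorphism on $K$, so the extension splits with no cohomological input. (Topologically, the whole argument is the statement that capping the $n$ inner boundary circles of $\Omega_n$ with once-punctured disks gives a split surjection $\tn{Mod}\pa{\Omega_n}\to\tn{PMod}\pa{D_n}\cong P_n$ whose kernel is the free abelian group generated by the boundary-parallel Dehn twists $T_{\partial_1},\ldots,T_{\partial_n}$.)
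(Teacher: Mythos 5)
Your proof is correct, but it identifies the $P_n$ quotient by a genuinely different route than the paper. The paper works topologically: it caps the inner boundary circles to get the homomorphism $\eta:\tn{Mod}\pa{\Omega_n}\to\tn{Mod}\pa{D_n}$ induced by inclusion, quotes Farb--Margalit to see that $\ker\eta$ is free abelian on the boundary-parallel Dehn twists $T_1,\ldots,T_n$, checks surjectivity onto $\tn{PMod}\pa{D_n}\cong P_n$, and then applies the left-split short exact sequence lemma. You instead stay entirely on the algebraic side $\mathcal{R}_n$: your $\Phi_1(r)=\beta_r$ plays the role of $\eta$, and the identification of its image with $P_n$ rests on Artin's theorem (faithfulness of the Artin representation together with the characterization of its image as the automorphisms $x_i\mapsto w_i^{-1}x_iw_i$ preserving $x_1\cdots x_n$ --- note the proof of Claim~\ref{psisurjective} only invokes the inclusion $\im\Phi_1\subseteq\mathcal{P}_n$, so you are using slightly more of Artin's theorem than the paper does, though all of it is in the cited sources), while the identification of $\ker\Phi_1$ with $\Z^n$ rests on the cyclicity of centralizers in free groups rather than on a mapping class group computation. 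Both proofs use the identical splitting, the diagonal of the exponent sum matrix $A$; your packaging of the two homomorphisms as a single jointly bijective map $\pa{\Phi_1,\Phi_2}$ into the direct product is a clean alternative to invoking the splitting lemma, and it yields the centrality of the $\Z^n$ factor as an immediate byproduct. What the paper's route buys is independence from the finer parts of Artin's theorem (it leans on standard surface topology instead); what your route buys is a self-contained algebraic argument whose only external input is Artin's classical result, with the topological picture relegated to a remark --- which is exactly the right dictionary between the two proofs.
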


\begin{proof}
We will define the homomorphisms in the sequence
\begin{equation}\label{eq:ses}
\begin{tikzcd}
1 \arrow[r] & \Z^n \arrow[r,"\delta"] & \tn{Mod}\pa{\Omega_n} \arrow[r,"\left.\eta\right|"]
	\arrow[l,bend left,dashed,"\mu"] & \tn{PMod}\pa{D_n} \arrow[r] & 1
\end{tikzcd}
\end{equation}
and show the sequence is exact and left split.
For each $1\leq i \leq n$, let $T_i$ denote a right Dehn twist about a simple closed curve in $\Omega_n$
parallel to $\partial_i$ as in Figure~\ref{fig:dehn1}.
If $a=(a_1,a_2,\ldots,a_n)\in\Z^n$, then define $\delta(a)=\br{T_1^{a_1} T_2^{a_2} \cdots T_n^{a_n}}$.
As these Dehn twists commute, $\delta$ is a homomorphism.
The map $\mu$ is defined to be the composition: first the isomorphism in Corollary~\ref{ModRn},
second the homomorphism $A$ (see Remark~\ref{Ahom} above), and third return the diagonal.
As each of these three maps is a homomorphism, $\mu$ is a homomorphism.
Observe that $\mu\circ\delta=\tn{id}$ on $\Z^n$.
So, $\delta$ is injective and~\eqref{eq:ses} is exact on the left.\\

There is a canonical homomorphism $\eta:\tn{Mod}\pa{\Omega_n} \to \tn{Mod}\pa{D_n}$ induced by
the inclusion $\Omega_n \subseteq D_n$~\cite[pp.~82--84]{fm}.
By~\cite[Thm.~3.18, p.~84]{fm} (or~\cite[Prop.~3.19, p.~85]{fm} for $n=1$),
$\ker{\eta}=\fg{\br{T_1},\br{T_2},\ldots,\br{T_n}}_{\tn{FA}}\cong \Z^{n}$
where $\tn{FA}$ indicates free abelian group.
By the definition of $\tn{Mod}\pa{\Omega_n}$, $\im{\eta}\subseteq \tn{PMod}\pa{D_n}$
and we have the restriction homomorphism $\left.\eta\right|:\tn{Mod}\pa{\Omega_n} \to \tn{PMod}\pa{D_n}$.
As $\ker{\eta}=\ker{\left.\eta\right|}$, the sequence~\eqref{eq:ses} is exact in the middle.
Let $\br{h}\in \tn{PMod}\pa{D_n}$. By an isotopy relative to $\partial D_n$ and $Q$,
we may assume $h$ also equals the identity on and inside each $\partial_i$ for $1\leq i \leq n$.
If $h'$ is the restriction of $h$ to $\Omega_n$,
then $\left.\eta\right|(\br{h'})=\br{h}$.
So, $\left.\eta\right|$ is surjective and~\eqref{eq:ses} is exact on the right.\\

Therefore,~\eqref{eq:ses} is short exact and left split.
By~\cite[Prop.~26, p.~385]{dummitfoote}, there is an induced isomorphism
$\tn{Mod}\pa{\Omega_n} \xrightarrow{\cong} \tn{PMod}\pa{D_n} \times \Z^n \cong P_n\times \Z^n$.
\end{proof}

Corollary~\ref{ModRn} and Claim~\ref{modPZ} prove the following folklore theorem.

\begin{theorem}\label{folklore}
For each $n\geq0$, there are canonical isomorphisms
\[
\mathcal{R}_n \cong \tn{Mod}\pa{\Omega_n} \cong  P_n\times\Z^n
\] 
\end{theorem}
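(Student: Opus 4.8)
The plan is to compose the two isomorphisms already established above; the argument is therefore very short, since essentially all of the geometric content has been front-loaded into the preceding claims. First I would invoke Corollary~\ref{ModRn}, which supplies the canonical isomorphism $\tn{Mod}\pa{\Omega_n} \xrightarrow{\cong} \mathcal{R}_n$ induced by $\br{h}\mapsto\psi(h)$. This isomorphism carries the bulk of the work: surjectivity of $\psi$ (Claim~\ref{psisurjective}) rests on Artin's identification of the relation-conjugating endomorphisms $x_i\mapsto t_i^{-1}x_i t_i$ of $F_n$ with pure braid automorphisms, and the computation $\ker\psi=\tn{Homeo}_0\pa{\Omega_n,\partial\Omega_n}$ (Claim~\ref{kerpsi}) rests on the disk-pushing Lemma~\ref{diskinomegan}, a Morse-theoretic straightening of the arcs $h(s_i)$, and Alexander's trick applied to $\Omega_n$ cut open along the $s_i$.

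Second I would invoke Claim~\ref{modPZ}, which supplies the canonical isomorphism $\tn{Mod}\pa{\Omega_n} \xrightarrow{\cong} \tn{PMod}\pa{D_n}\times\Z^n \cong P_n\times\Z^n$, built from the left-split short exact sequence~\eqref{eq:ses}: the splitting $\mu$ is the diagonal of the exponent-sum matrix $A$, and middle and right exactness come from the capping (Birman) exact sequence for the inclusion $\Omega_n\subseteq D_n$. Composing the two isomorphisms gives $\mathcal{R}_n \cong \tn{Mod}\pa{\Omega_n}\cong P_n\times\Z^n$. Every map in the chain is defined without auxiliary choices---$\psi$ is fixed by the segments $s_i$ and the generators $x_i$ of Figures~\ref{fig:omegan} and~\ref{fig:omegangen}, $\eta$ is induced by inclusion, $\delta$ uses the boundary-parallel right Dehn twists $T_i$, and $A$ simply reads off exponent sums---so the resulting isomorphisms are canonical, as claimed.

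Since both isomorphisms are already proven in the excerpt, there is no genuine obstacle left in the present statement; the proof is a one-line composition. If instead one were establishing the theorem from scratch, I would expect the chief difficulty to be Claim~\ref{kerpsi}: pinning down $\ker\psi$ requires the innermost-disk argument of Lemma~\ref{diskinomegan} to remove adjacent oppositely-oriented crossings of $h(\overline{s_i})$ with the segments $g_j$, followed by the arc-straightening and Alexander-trick steps. By comparison, identifying $\tn{Mod}\pa{\Omega_n}$ with $P_n\times\Z^n$ is relatively routine once the splitting by boundary Dehn twists is noticed and the Birman exact sequence is invoked.
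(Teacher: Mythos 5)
Your proposal is correct and matches the paper exactly: the theorem is stated immediately after Corollary~\ref{ModRn} and Claim~\ref{modPZ} and is proved by simply composing those two isomorphisms, which is precisely what you do. Your summary of where the real work lives (Claims~\ref{psisurjective} and~\ref{kerpsi} behind the first isomorphism, the split exact sequence~\eqref{eq:ses} behind the second) accurately reflects the structure of the paper's argument.
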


\begin{examples}
Figure~\ref{fig:braids} depicts five framed pure braids corresponding (from left to right) to:
the Dehn twists $T_c^{-1}$ and $T_c$ from Figure~\ref{fig:dehn1},
the Dehn twists $T_c^{-1}$ and $T_c$ from Figure~\ref{fig:dehn2},
and the Artin presentation $r(a,b,c)$.
\begin{figure}[htbp!]
    \centerline{\includegraphics[scale=1.0]{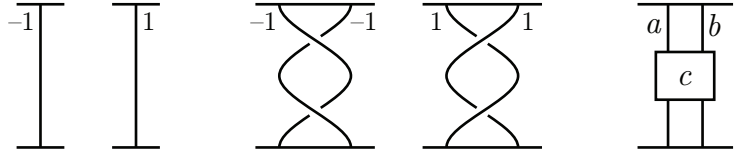}}
    \caption{Framed pure braids.}
\label{fig:braids}
\end{figure}
\end{examples}

We now shift our attention to $3$- and $4$-manifolds in Artin presentation theory.
Each Artin presentation $r\in\mathcal{R}_n$ determines the following.
\begin{align*}
\pi(r)	& \quad \tn{the group presented by $r$}\\
A(r)		& \quad \tn{the exponent sum matrix of $r$}\\
h(r)		&	\quad \tn{a self-diffeomorphism of $\Omega_n$}\\
M^3(r)	&	\quad \tn{a closed, oriented $3$-manifold}\\
W^4(r)	& \quad \tn{a smooth, compact, simply-connected, oriented $4$-manifold}
\end{align*}
By Corollary~\ref{ModRn}, $r$ determines a self-diffeomorphism $h=h(r)$ of $\Omega_n$ equal to the identity
on $\partial \Omega_n$ and unique up to isotopy relative to $\partial\Omega_n$.
The $3$-manifold $M^3(r)$ is defined by Winkelnkemper's open book construction with planar page $\Omega_n$
(see Gonz{\'a}lez-Acu{\~n}a~\cite{ga} and Winkelnkemper~\cite{winkelnkemper}).
Namely, consider the mapping torus $\Omega(h)$ of $h$ which is obtained from 
$\Omega_n\times[0,1]$ by identifying $(x,1)$ with $(h(x),0)$ for each $x\in\Omega_n$.
The boundary of $\Omega(h)$ equals $\pa{\partial\Omega_n}\times S^1$,
and $M^3(r)$ is obtained from $\Omega(h)$ by gluing on $\pa{\partial\Omega_n}\times D^2$
using the identity function on $\pa{\partial\Omega_n}\times S^1$.
The fundamental group of $M^3(r)$ is isomorphic to $\pi(r)$ (see~\cite[p.~10]{ga} or~\cite[p.~247]{winkelnkemper}).
In particular, $M^3(r)$ is an integer homology $3$-sphere if and only if $A(r)$ is unimodular.
Using the symplectic property of closed surface homeomorphisms, Winkelnkemper observed that $A(r)$ is always
symmetric for an Artin presentation $r$ (see~\cite[p.~250]{winkelnkemper}, or see~\cite{calcutalgebraic}
for an algebraic proof of this fact).
This led Winkelnkemper to discover that $r$ determines a $4$-manifold
using a sort of relative open book construction as follows.
Embed $\Omega_n$ in $S^2$, and let $C$ be the closure in $S^2$ of the complement of $\Omega_n$
(so, $C$ is a disjoint union of $n+1$ smooth $2$-disks).
Extend $h$ to $S^2$ then to $D^3$, and let $H$ be the resulting self-diffeomorphism 
of $D^3$. The mapping torus $W(H)$ of $H$ contains $C\times S^1$ in its boundary.
Then, $W^4(r)$ is obtained from $W(H)$ by gluing on $C\times D^2$ in the canonical way.
In particular, $\partial W^4(r)=M^3(r)$ and the intersection form of $W^4(r)$ is given by $A(r)$.
If $M^3(r)$ is the $3$-sphere, then we define $X^4(r)=W^4(r) \cup_{\partial} D^4$ a smooth, closed, simply-connected, oriented $4$-manifold (that is, we close up with a $4$-handle). By Cerf's theorem~\cite{cerf}, a $4$-handle may be added in an essentially unique way, and so $X^4(r)$ is well-defined.\\

An alternative definition of $W^4(r)$ is as follows (see also~\cite[$\S$2]{cw}).
Let $r\in\mathcal{R}_n$ be an Artin presentation. 
By Theorem~\ref{folklore}, $r$ determines an integer framed pure braid.
The framing of the $i$th strand is $\br{A(r)}_{ii}$.
Let $L(r)$ be the framed pure link in $S^3=\partial D^4$ obtained as the closure of this framed pure braid.
Define $W^4(r)$ to be $D^4$ union $n$ $2$-handles attached along $L(r)$.
So, $L(r)$ is a Kirby diagram for $W^4(r)$ (see~\cite[p.~115]{gs} for an introduction to Kirby diagrams).
Figure~\ref{fig:R2diagram} gives Kirby diagrams for $W^4(r)$ where $r\in\mathcal{R}_1$ and
$r\in\mathcal{R}_2$.
\begin{figure}[htbp!]
    \centerline{\includegraphics[scale=1.0]{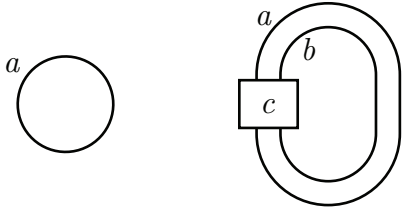}}
    \caption{Kirby diagrams for $W^4(r)$ where $r=\fg{x_1 \mid x_1^a}\in\mathcal{R}_1$ (left) and $r=r(a,b,c)\in\mathcal{R}_2$ (right).}
\label{fig:R2diagram}
\end{figure}
For example, $X^4(\fg{x_1\mid x_1})\approx\cp$ and $X^4(\fg{x_1\mid x_1^{-1}})\approx\cpb$.\\

\begin{remark}\label{basicdiffeos}
Three basic diffeomorphisms between the $4$-manifolds $W^4(r(a,b,c))$ are as follows
where $a$, $b$, and $c$ are any integers.
\begin{align}
W^4(r(a,b,c)) &\approx W^4(r(b,a,c)) \label{symmetry}\\
W^4(r(a,b,1)) &\approx W^4(r(a,b,-1)) \label{hopf}\\
W^4(r(-a,-b,-c)) &\approx \overline{W^4(r(a,b,c))} \label{inverse}
\end{align}
The first two diffeomorphisms are given by simple isotopies of the Kirby diagrams:
for~\eqref{symmetry} interchange the two link components, and for~\eqref{hopf} flip one component to switch the sign of the single twist.
Regarding~\eqref{hopf}, we mention that while the links are not equivalent as \emph{oriented} links,
the resulting $4$-manifolds are independent of the orientations of the link components.
The third diffeomorphism is a special case of the fact that given a Kirby diagram for a $2$-handlebody $Y$, one obtains a Kirby diagram for $\overline{Y}$ by switching all crossings (that is, take a mirror of the link) and multiplying each framing coefficient by $-1$.
If $M^3(r)$ is $S^3$, then all three diffeomorphisms also hold with $X^4$ in place of $W^4$.
\end{remark}

\section{Triangle Groups and Artin Presentations}

We recall basic facts about triangle groups (for details, see Magnus~\cite[Ch.~II]{magnus}
and Ratcliffe~\cite[$\S$7.2]{ratcliffe}).
Let $l$, $m$, and $n$ be integers greater than or equal to $2$.
Let $\Delta=\Delta(l,m,n)$ be a triangle with angles $\pi/l$, $\pi/m$, and $\pi/n$.
Define
\[
	\delta = \frac{1}{l}+\frac{1}{m}+\frac{1}{n}
\]
The triangle $\Delta$ is: \emph{spherical} and lies in $X=S^2$ if $\delta>1$, \emph{Euclidean} and lies in $X=\R^2$ if $\delta=1$, and \emph{hyperbolic} and lies in $X=\H^2$ if $\delta<1$. The \emph{triangle reflection group} $T^{\ast}(l,m,n)$ is the group generated by the reflections of $X$ in the lines containing the sides of $\Delta$. The \emph{triangle group} $T(l,m,n)$ (sometimes called a \emph{von Dyck group}) is the index $2$ subgroup of $T^{\ast}(l,m,n)$ consisting of orientation preserving isometries of $X$. Geometrically, $T(l,m,n)$ is generated by the rotations of $X$ about the vertices of $\Delta$ by $2\pi/l$, $2\pi/m$, and $2\pi/n$ respectively. The triangle group $T(l,m,n)$ is presented by $\fg{x,y\mid x^l,y^m,(xy)^n}$. Notice that $T(l,m,n)$ is independent up to isomorphism of the order in which the integers $l$, $m$, and $n$ are listed.
The triangle group $T(l,m,n)$ is: \emph{spherical} and finite (but nontrivial) if $\delta>1$, \emph{Euclidean} and infinite if $\delta=1$, and \emph{hyperbolic} and infinite if $\delta<1$.
For example, $T(2,3,5)$ is the \emph{icosahedral group} isomorphic to the order $60$ alternating group $A_5$ on five letters. The infinite groups $T(3,3,3)$ and $T(3,3,4)$ correspond respectively to triangular tilings of the Euclidean and hyperbolic planes.

\begin{lemma}\label{tglemma}
Let $r=r(a,b,c)\in\mathcal{R}_2$. If $\card{a-c}$, $\card{b-c}$, and $\card{c}$ are all greater than or equal to $2$, then $\pi(r)$ is nontrivial.
If in addition $1/\card{a-c}+1/\card{b-c}+1/\card{c}\leq 1$, then $\pi(r)$ is infinite.
\end{lemma}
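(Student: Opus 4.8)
\emph{Proof proposal.} The plan is to realize the triangle group $T(\card{a-c},\card{b-c},\card{c})$ as a quotient of $\pi(r)$ and then invoke the structure of triangle groups recalled above. Recall from Example~\ref{APex} that $r=r(a,b,c)$ has relators $r_1=x_1^{a-c}(x_1x_2)^c$ and $r_2=x_2^{b-c}(x_1x_2)^c$, so
\[
\pi(r)=\fg{x_1,x_2\mid x_1^{a-c}(x_1x_2)^c,\ x_2^{b-c}(x_1x_2)^c}.
\]
Put $l=\card{a-c}$, $m=\card{b-c}$, and $n=\card{c}$; by hypothesis $l,m,n\geq2$. The key observation is that imposing the single extra relation $(x_1x_2)^c=1$ turns this into a triangle group presentation.

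Concretely, I would form the quotient $G$ of $\pi(r)$ by the normal closure of $(x_1x_2)^c$ in $F_2$. In $G$ we have $(x_1x_2)^c=1$, so the relator $x_1^{a-c}(x_1x_2)^c$ collapses to $x_1^{a-c}$ and $x_2^{b-c}(x_1x_2)^c$ collapses to $x_2^{b-c}$; equivalently, the sets $\{x_1^{a-c}(x_1x_2)^c,\ x_2^{b-c}(x_1x_2)^c,\ (x_1x_2)^c\}$ and $\{x_1^{a-c},\ x_2^{b-c},\ (x_1x_2)^c\}$ have the same normal closure in $F_2$. Hence $G=\fg{x_1,x_2\mid x_1^{a-c},\ x_2^{b-c},\ (x_1x_2)^c}$. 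Since a word and its inverse have the same normal closure, I may replace $x_1^{a-c}$, $x_2^{b-c}$, and $(x_1x_2)^c$ by $x_1^{l}$, $x_2^{m}$, and $(x_1x_2)^n$ respectively, so $G$ is precisely the triangle group $T(l,m,n)=\fg{x,y\mid x^l,y^m,(xy)^n}$.

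Now I would finish by quoting the facts recalled above. Because $l,m,n\geq2$, the triangle group $T(l,m,n)$ is nontrivial (finite and nontrivial if $\delta>1$, infinite otherwise). Since $\pi(r)$ surjects onto $T(l,m,n)$, the group $\pi(r)$ is nontrivial, which is the first assertion. If in addition $1/l+1/m+1/n=\delta\leq1$, then $T(l,m,n)$ is infinite (Euclidean if $\delta=1$, hyperbolic if $\delta<1$), so $\pi(r)$ surjects onto an infinite group and is therefore itself infinite, giving the second assertion. There is essentially no obstacle beyond verifying the two elementary points already used: that imposing $(x_1x_2)^c=1$ genuinely reduces the two Artin relators to the two power relators (it does, since $x_i^{\,\ast}(x_1x_2)^c=x_i^{\,\ast}$ once $(x_1x_2)^c=1$), and that only $\card{a-c}$, $\card{b-c}$, $\card{c}$—not the signs of $a-c$, $b-c$, $c$—matter, because normal closures are insensitive to inverting a relator.
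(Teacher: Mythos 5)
Your proposal is correct and is essentially identical to the paper's own proof: both add the relation $(x_1x_2)^c$ to obtain a surjection of $\pi(r)$ onto the triangle group $T(\card{a-c},\card{b-c},\card{c})$ and then invoke the standard facts about spherical, Euclidean, and hyperbolic triangle groups. Your extra remarks on normal closures and signs just make explicit steps the paper leaves implicit.
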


\begin{proof}
We construct a surjective group homomorphism
$\pi(r) \twoheadrightarrow T(\card{a-c},\card{b-c},\card{c})$.
Add the relation $(x_1x_2)^c$ to $r$ to obtain
\begin{align*}
\pi(r) &\twoheadrightarrow
\fg{x_1,x_2\mid x_1^{a-c}(x_1x_2)^c,x_2^{b-c}(x_1x_2)^c,(x_1x_2)^c}\\
&\cong \fg{x_1,x_2\mid x_1^{a-c},x_2^{b-c},(x_1x_2)^c}\\
&\cong \fg{x_1,x_2\mid x_1^{\card{a-c}},x_2^{\card{b-c}},(x_1x_2)^{\card{c}}}\\
&= T(\card{a-c},\card{b-c},\card{c})
\end{align*}
Now, apply properties of triangle groups recalled above.
\end{proof}

\begin{examples}
Consider the groups $\pi(r(-1,-3,2))$ and $\pi(r(10,1,3))$.
Both groups are perfect since their exponent sum matrices are unimodular.
Lemma~\ref{tglemma} implies that $\pi(r(-1,-3,2))$ is nontrivial and $\pi(r(10,1,3))$ is infinite.
The proof of Lemma~\ref{tglemma} shows that $\pi(r(-1,-3,2))$ surjects onto $T(3,5,2)\cong A_5$. 
\end{examples}

\begin{theorem}\label{tlist}
Let $r=r(a,b,c)\in\mathcal{R}_2$.
If $\pi(r)$ is trivial, then the $3$-tuple $(a,b,c)$ lies in the following list where $-(a,b,c)=(-a,-b,-c)$.
\begin{enumerate}
\item\label{t1} $(\pm1,\pm1,0)$ (four $3$-tuples)\\
\item\label{t2} $\pm(2,1,\pm1)$ and $\pm(1,2,\pm1)$ (eight $3$-tuples)\\
\item\label{t3} $\pm(1,5,2)$, $\pm(5,1,2)$, $\pm(2,5,3)$, $\pm(5,2,3)$ (eight $3$-tuples)\\
\item\label{t4} $(a,0,\pm1)$ and $(0,b,\pm1)$ where $a,b\in\Z$\\
\item\label{t5} $(c\pm1,c\mp1,c)$ where $c\in\Z$
\end{enumerate}
\end{theorem}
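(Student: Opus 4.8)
The plan is to reduce the problem to an analysis of when the triangle group $T(\card{a-c},\card{b-c},\card{c})$ and various degenerate analogues can be trivial, using Lemma~\ref{tglemma} as the main engine and the exponent sum matrix $A(r)$ as a constraint. Throughout, write $p=\card{a-c}$, $q=\card{b-c}$, $s=\card{c}$. The key observation is that $\pi(r)$ trivial forces $\pi(r)$ perfect, hence $A(r)=\begin{bmatrix}a&c\\c&b\end{bmatrix}$ is unimodular, so $ab-c^2=\pm1$. This Diophantine constraint, combined with the nontriviality/infiniteness criteria from Lemma~\ref{tglemma}, should pin down the list. First I would dispose of the degenerate cases where one of $p,q,s$ is $0$ or $1$: if $s=\card{c}=0$ then $ab=\pm1$ so $(a,b,c)=(\pm1,\pm1,0)$, giving item~\eqref{t1}; if $\card{c}=1$ then unimodularity reads $ab-1=\pm1$, so $ab\in\{0,2\}$, and I would check directly (these are easy two-generator presentations — when $c=\pm1$ one can absorb a generator) which triples give the trivial group, yielding items~\eqref{t2}, \eqref{t4}, and \eqref{t5} (the latter coming from $b=0$ rewritten, or $a=0$, after noting $a-c=\pm1$ or $b-c=\pm1$). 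Similarly if $p=\card{a-c}\le 1$ or $q=\card{b-c}\le1$ the relation $r_1$ or $r_2$ lets one eliminate a generator and reduce to $\mathcal R_1$, whose trivial-group members are classified by Example~(3.4).

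So the remaining case is $p,q,s\ge 2$, where Lemma~\ref{tglemma} applies: $\pi(r)$ trivial forces $T(p,q,s)$ trivial, and moreover $1/p+1/q+1/s>1$, i.e. $(p,q,s)$ is a \emph{spherical} triple. But every spherical triangle group $T(p,q,s)$ with $p,q,s\ge2$ is \emph{nontrivial} — this is exactly the statement recalled in the text ("spherical and finite (but nontrivial)"). Hence, strictly speaking, there would be \emph{no} triples with $p,q,s\ge2$ at all — which cannot be right since item~\eqref{t3} exists. The resolution is that the surjection in Lemma~\ref{tglemma} goes \emph{onto} $T(p,q,s)$ only after adding the extra relation $(x_1x_2)^c$; the triples in \eqref{t3} must be ones where, despite $p,q,s\ge2$ and sphericity, the added relation $(x_1x_2)^c$ is already a consequence of $r_1,r_2$ so that $\pi(r)$ itself is trivial even though $T(p,q,s)\ne1$. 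Concretely, $(1,5,2)$ gives $(p,q,s)=(1,5,2)$ — wait, $\card{a-c}=\card{1-2}=1$ — so these land in the $p\le1$ or $q\le1$ regime after all; and $(2,5,3)$ gives $(p,q,s)=(\card{2-3},\card{5-3},3)=(1,2,3)$, again degenerate. I would therefore unify the bookkeeping: after handling $s\in\{0,1\}$, the genuinely remaining case has $s=\card{c}\ge2$ \emph{and} $p,q\ge 2$, which by Lemma~\ref{tglemma} is impossible; every surviving triple has $\min(p,q)\le1$, i.e. $a-c\in\{-1,0,1\}$ or $b-c\in\{-1,0,1\}$, and I run the $\mathcal R_1$-reduction.

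The heart of the argument, and the step I expect to be the main obstacle, is the case $c=\pm1$ with the eliminated presentation still potentially having large complexity: setting (say) $c=1$, $a-1=\card{a-c}$ might be large, and one must show that among all $(a,b)$ with $ab-1=\pm1$ only the listed triples give the trivial group — here the point is that $ab\in\{0,2\}$ is very restrictive, so actually this is not hard. The genuinely delicate sub-case is $\card{a-c}=1$ (say $a=c+1$) with $\card{b-c}$ and $\card c$ both $\ge 2$: then eliminating $x_1$ via $r_1$ turns $r$ into a one-relator presentation $\fg{x_2\mid w(x_2)}$ and one must compute the exponent sum to see it is $\pm1$ precisely in the spherical-boundary cases, recovering $(c\pm1,c\mp1,c)$ and the sporadic $\pm(1,5,2),\pm(5,1,2),\pm(2,5,3),\pm(5,2,3)$ from the Diophantine condition $ab-c^2=\pm1$ with $a-c=\pm1$. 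I would organize this as: substitute $a=c+\e$ with $\e=\pm1$ into $ab-c^2=\pm1$, solve for $b$ in terms of $c$, enumerate the finitely many sporadic solutions and the one infinite family, and for each confirm triviality of $\pi(r)$ by an explicit Tietze reduction (or cite the $\mathcal R_1$ classification once a generator is removed). Finally I would record the sign symmetry $\pi(r(a,b,c))\cong\pi(r(-a,-b,-c))$ from Example~(3.6) and the symmetry $a\leftrightarrow b$ to collapse the enumeration to the stated list, and tally the counts ($4+8+8$ in items \eqref{t1}--\eqref{t3} plus the two infinite families).
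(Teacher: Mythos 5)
Your proposal follows essentially the same route as the paper: unimodularity of $A(r)$ gives $ab-c^2=\pm1$, Lemma~\ref{tglemma} forces $\card{a-c}\le1$, $\card{b-c}\le1$, or $\card{c}\le1$, and the list then falls out of the Diophantine case analysis (your $\mathcal{R}_1$-reduction via eliminating a generator is extra work that the stated theorem, which only asserts necessity, does not require, though it does supply the converse). The one loose end is the sub-case $a=c$ (or $b=c$) with $\card{c}\ge2$, where no generator can be eliminated from $r_1=(x_1x_2)^c$ and which your substitution $a=c+\e$, $\e=\pm1$ silently skips; it is disposed of in one line since $ab-c^2=c(b-c)=\pm1$ forces $\card{c}=1$.
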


\begin{proof}
As $\pi(r)$ is trivial, $A(r)$ must be unimodular which means $ab-c^2=\pm1$.
Now, the basic idea is that either $\card{c}\leq1$ is small and $ab=c^2\pm1$ determines $a$ and $b$, or $\card{c}>1$ is larger and Lemma~\ref{tglemma} forces $a$ or $b$ to be close to $c$. 
We have $ab=c^2\pm1$ and, by Lemma~\ref{tglemma}, $\card{a-c}\le1$, $\card{b-c}\le1$, or $\card{c}\le1$. Notice that $(a,b,c)$ appears in the given list if and only if $-(a,b,c)$ appears.
Indeed, as $\pi(r(a,b,c))\cong\pi(r(-a,-b,-c))$ (see Examples~\ref{APex}\ref{charR2}),
our list must have this property.
So, it suffices to assume $c\geq0$ for the rest of the proof.
If $c=0$, then $ab=\pm1$, which gives the tuples~\ref{t1}.
If $c=1$, then $ab=0$ or $ab=2$. The former gives the tuples~\ref{t4}, and the latter gives the tuples~\ref{t2}.\\

Assume now that $c>1$. Then, $\card{a-c}\le1$ or $\card{b-c}\le1$,
and so $a$ or $b$ equals $c-1$, $c$, or $c+1$. If $a=c$, then $cb=c^2\pm1$ implies that $c|\pm1$, a contradiction. Similarly, $b\ne c$. Thus, $a=c\pm1$ or $b=c\pm1$.\newline
\textbf{Case 1: $ab=c^2+1$.} Suppose $a=c\pm1$.
Then, $ab=c^2+1$ implies $a|c^2+1$, and $a=c\pm1$ implies $a|c^2-1$.
So, $a|2$ and, as $a=c\pm1$ and $c>1$, we have $a=1$ or $a=2$. This gives the tuples $(1,5,2)$ and $(2,5,3)$. Similarly, $b=c\pm1$ gives the tuples $(5,1,2)$ and $(5,2,3)$.\newline
\textbf{Case 2: $ab=c^2-1$.} Then, $a=c\pm1$ if and only if $b=c\mp1$. This gives the tuples~\ref{t5}.
\end{proof}

\begin{remark}
It is not difficult to verify the converse of Theorem~\ref{tlist} directly using Tietze transformations.
This converse also follows from the Kirby calculus arguments in the next section.
Hence, Theorem~\ref{tlist} lists exactly the Artin presentations on two generators that present the trivial group.
\end{remark}

\section{4-manifolds}

In this section, we show that $M^3(r)$ is $S^3$ for each $r$ listed in Theorem~\ref{tlist}, and we identify the corresponding closed $4$-manifolds $X^4(r)=W^4(r)\cup_{\partial} D^4$.
First, we present a useful operation.

\begin{lemma}\label{slidelemma}
Let $r=r(a,b,c)\in\mathcal{R}_2$. There are diffeomorphisms
\begin{align}
W^4(r(a,b,c)) &\approx W^4(r(a+b-2c,b,b-c)) \label{firstdiffeo} \tag{$\dag$}\\
W^4(r(a,b,c)) &\approx W^4(r(a,a+b-2c,a-c)) \label{seconddiffeo} \tag{$\ddag$}
\end{align}
In particular, the corresponding $3$-manifolds $M^3(r)$ are diffeomorphic, and the corresponding groups $\pi(r)$ are isomorphic.
\end{lemma}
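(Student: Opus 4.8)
The plan is to realize the move as a single $2$-handle slide in a Kirby diagram, and then deduce the statements about $M^3(r)$ and $\pi(r)$. Recall from Theorem~\ref{folklore} and the construction preceding Figure~\ref{fig:R2diagram} that $W^4(r(a,b,c))$ has a Kirby diagram $L(r(a,b,c))$ consisting of two unknotted $2$-handles whose attaching circles $K_1$ and $K_2$ carry framings $a$ and $b$ and have linking number $c$; in particular the linking matrix of the diagram is $A(r(a,b,c))$. To prove \eqref{firstdiffeo} I would slide the first handle over the second: replace $K_1$ by the band sum $K_1'$ of $K_1$ with a $b$-framed parallel pushoff of $K_2$, taking the band to run alongside $K_2$ and choosing the sign of the band sum so that the new framing becomes $a+b-2c$. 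A handle slide preserves the oriented diffeomorphism type of the $4$-manifold built from a Kirby diagram \cite{gs}, and the standard slide formulas then give that $K_1'$ has framing $a+b-2c$, that $K_1'$ and $K_2$ have linking number $c-b$, and that $K_2$ still has framing $b$. As a consistency check, this slide is the base change $e_1\mapsto e_1-e_2$, $e_2\mapsto e_2$ on second homology, which carries the intersection form $A(r(a,b,c))$ to $A(r(a+b-2c,b,c-b))$.

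The one point requiring genuine care is that the new framed link $K_1'\cup K_2$ is again of the standard form of Figure~\ref{fig:R2diagram} --- two unknots linked in the standard pattern --- and not a genuinely more complicated link. With the band chosen to run alongside $K_2$, an explicit isotopy of the Kirby diagram puts $K_1'\cup K_2$ into this form; producing that isotopy is the technical heart of the proof, and I expect it to be the main obstacle. Granting it, $K_1'\cup K_2$ is the Kirby diagram $L(r(a+b-2c,b,c-b))$, and since reversing the orientation of an attaching circle does not change the resulting $4$-manifold (as already noted in Remark~\ref{basicdiffeos}), we may replace the linking number $c-b$ by $b-c$. This proves \eqref{firstdiffeo}. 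Statement \eqref{seconddiffeo} follows formally from \eqref{firstdiffeo} and the symmetry diffeomorphism \eqref{symmetry} of Remark~\ref{basicdiffeos}:
\[
W^4(r(a,b,c))\approx W^4(r(b,a,c))\approx W^4(r(a+b-2c,a,a-c))\approx W^4(r(a,a+b-2c,a-c)).
\]

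Finally, since $M^3(r)=\partial W^4(r)$, an orientation-preserving diffeomorphism of the $4$-manifolds restricts to one of the boundary $3$-manifolds, which gives the claimed diffeomorphism of the corresponding $M^3(r)$; and since $\pi_1(M^3(r))\cong\pi(r)$, the corresponding groups $\pi(r)$ are isomorphic. All the framing and linking-number bookkeeping above is routine; the one substantive step is the Kirby-diagram isotopy showing that the link obtained from the handle slide is again in the standard form of Figure~\ref{fig:R2diagram}.
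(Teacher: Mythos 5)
Your proposal follows essentially the same route as the paper: a single handle subtraction of the $a$-framed circle over the $b$-framed one, changing its framing to $a+b-2c$ and the linking number to $\pm(b-c)$, followed by an isotopy returning the diagram to the standard clasp form of Figure~\ref{fig:R2diagram}; the paper performs exactly this slide and supplies, in Figures~\ref{fig:slide} and~\ref{fig:slide2}, the isotopy you defer. Your derivation of \eqref{seconddiffeo} from \eqref{firstdiffeo} via the symmetry \eqref{symmetry} is a harmless variation (the paper instead slides the $b$-framed circle over the $a$-framed one), and the step you flag as the main obstacle is indeed the only substantive content, which the paper settles pictorially.
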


\begin{proof}
For the first diffeomorphism, proceed as shown in Figure~\ref{fig:slide}.  
\begin{figure}[htbp!]
    \centerline{\includegraphics[scale=1.0]{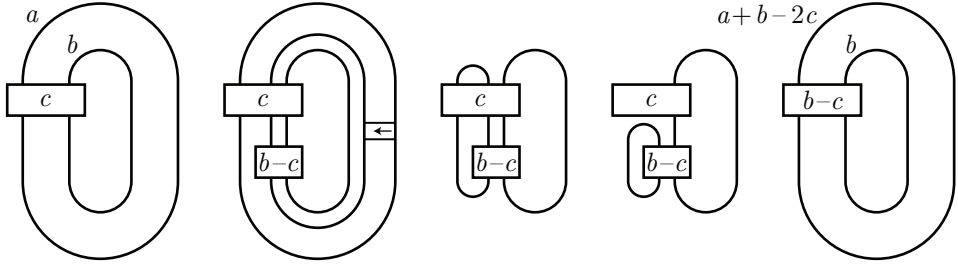}}
    \caption{From the left: Kirby diagram for $W^4(r(a,b,c))$, a $2$-handle slide, and results of two isotopies.}
\label{fig:slide}
\end{figure}
In the second diagram in Figure~\ref{fig:slide}, the middle circle is parallel to the $b$-framed circle and has linking number $b$ with it.
If the $a$- and $b$-framed circles are oriented clockwise, then the indicated $2$-handle slide is a handle subtraction; the framing of the $a$-framed circle changes to $a+b-2c$ (see~\cite[p.~141]{gs}).
\begin{figure}[htbp!]
    \centerline{\includegraphics[scale=1.0]{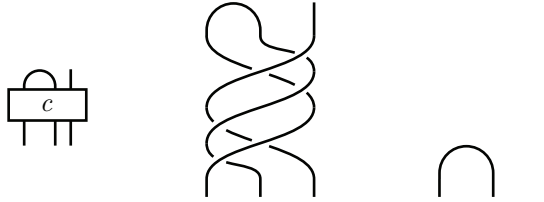}}
    \caption{From the left: a portion of the middle diagram in Figure~\ref{fig:slide}, the same portion (enlarged) with $c=1$ and without box notation, and the result of the isotopy of the portion.}
\label{fig:slide2}
\end{figure}
The isotopy of the middle diagram in Figure~\ref{fig:slide} is explained in Figure~\ref{fig:slide2}.
The result of Figure~\ref{fig:slide} is a Kirby diagram for \hbox{$W^4(r(a+b-2c,b,b-c))$}.
For~\eqref{seconddiffeo}, instead slide the $b$-framed circle over the $a$-framed circle in a similar manner.
The remaining claims in the lemma follow from~\eqref{firstdiffeo} and~\eqref{seconddiffeo} by taking boundaries.
\end{proof}

\begin{theorem}\label{fms}
For each Artin presentation $r$ listed in Theorem~\ref{tlist}, $M^3(r)$ is $S^3$.
Furthermore, the corresponding closed $4$-manifolds $X^4(r)$ are as follows.
\begin{enumerate}
\item\label{m1} $X^4(r(1,1,0))\approx\cp\cs\cp$ and $X^4(r(1,-1,0))\approx\cp\cs\cpb$\\
\item\label{m2} $X^4(r(2,1,1))\approx\cp\cs\cp$\\
\item\label{m3} $X^4(r(5,1,2))\approx X^4(r(5,2,3))\approx\cp\cs\cp$\\
\item\label{m4} $X^4(r(a,0,1))\approx
		\begin{cases}
			S^2\times S^2 	&\tn{if $a$ is even}\\
			\cp\cs\cpb 		 	&\tn{if $a$ is odd}
		\end{cases}$\\
		
\item\label{m5} $X^4(r(c+1,c-1,c))\approx
		\begin{cases}
			S^2\times S^2 	&\tn{if $c$ is odd}\\
			\cp\cs\cpb 		 	&\tn{if $c$ is even}
		\end{cases}$
\end{enumerate}
The $4$-manifolds for the remaining $3$-tuples in Theorem~\ref{tlist} are determined immediately from those just listed
and Remark~\ref{basicdiffeos}.
\end{theorem}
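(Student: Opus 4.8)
The plan is to work entirely in the Kirby calculus, using the alternative description of $W^4(r)$ from Section~2: by Figure~\ref{fig:R2diagram}, $W^4(r(a,b,c))$ is $D^4$ with two $2$-handles attached along a link $L(r)$ whose two components are unknots carrying framings $a$ and $b$ and having linking number $c$ (so $L(r)$ is the $2$-component unlink when $c=0$ and the Hopf link when $c=\pm1$). I would reduce every tuple in Theorem~\ref{tlist} to one of four base cases by repeatedly applying the slide move of Lemma~\ref{slidelemma} together with the basic diffeomorphisms \eqref{symmetry}, \eqref{hopf}, \eqref{inverse} of Remark~\ref{basicdiffeos}; since those are all diffeomorphisms of $W^4$, they identify both the boundaries $M^3(r)=\partial W^4(r)$ and, after the $4$-handle is attached (uniquely, by Cerf's theorem \cite{cerf}), the closed manifolds $X^4(r)$.

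The base cases are the following. For $r(1,1,0)$ and $r(1,-1,0)$ the link $L(r)$ is a $2$-component unlink with framings $(1,1)$ and $(1,-1)$, so $W^4$ is a boundary connected sum of Euler-number-$\pm1$ disk bundles over $S^2$; hence $\partial W^4=S^3$ and capping off gives $\cp\cs\cp$ and $\cp\cs\cpb$. For $r(0,0,1)$ the link is the Hopf link with both framings $0$, which is the standard Kirby diagram of $S^2\times S^2$ (equivalently, $W^4(r(0,0,1))\cup D^4$ is the double of the trivial $D^2$-bundle over $S^2$; see \cite{gs}); in particular $\partial W^4(r(0,0,1))=S^3$. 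For $r(1,0,1)$ the link is the Hopf link with framings $(1,0)$; blowing down the $(+1)$-framed unknot (see \cite{gs}) turns its Hopf partner into a $(-1)$-framed unknot and splits off a $\cp$ summand, so the boundary is $S^3$ and $X^4(r(1,0,1))\approx\cpb\cs\cp$. With these in hand the reductions are routine: Lemma~\ref{slidelemma}\,\eqref{firstdiffeo} carries $(2,1,1)$ to $(1,1,0)$, giving \ref{m2}; it carries $(5,1,2)$ to $(2,1,-1)$, which by \eqref{hopf} is $W^4(r(2,1,1))$, and \eqref{seconddiffeo} carries $(5,2,3)$ to $(5,1,2)$, giving \ref{m3}; it followed by \eqref{hopf} gives $W^4(r(a,0,1))\approx W^4(r(a-2,0,1))$, whose iteration lands on $r(0,0,1)$ for $a$ even and on $r(1,0,1)$ for $a$ odd (the tuples $(0,b,\pm1)$ and the framing $-1$ being reached via \eqref{symmetry} and \eqref{hopf}), giving \ref{m4}; and for \ref{m5} it carries $(c+1,c-1,c)$ to $(0,c-1,-1)$, which by \eqref{hopf} and \eqref{symmetry} equals $W^4(r(c-1,0,1))$, reducing to case \ref{m4} with the parity $c-1$ that flips as stated. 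In every instance $M^3(r)$ is thus diffeomorphic to the boundary of a base case, namely $S^3$. The remaining tuples of Theorem~\ref{tlist} follow from those already treated by \eqref{symmetry} (which permutes $a$ and $b$) and \eqref{inverse} (which reverses orientation, sending $\cp\cs\cp$ to $\cpb\cs\cpb$ and fixing $\cp\cs\cpb$ and $S^2\times S^2$), exactly as asserted in the last sentence of the theorem.

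I expect the only delicate points to be the two base identifications that are not simply "read off the diagram": recognizing $r(0,0,1)$ as $S^2\times S^2$ (handled cleanly via the double-of-a-disk-bundle picture), and the blow-down computation for $r(1,0,1)$, where one must get the conventions right — which component is blown down, the sign of the resulting framing, and which of $\cp$, $\cpb$ is split off — so that the parities in \ref{m4} and \ref{m5} come out correctly. Everything else is bookkeeping with Lemma~\ref{slidelemma} and Remark~\ref{basicdiffeos}.
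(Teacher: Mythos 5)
Your proposal is correct and follows essentially the same route as the paper: reduce each tuple to a base case using Lemma~\ref{slidelemma} and Remark~\ref{basicdiffeos}, then identify the base Kirby diagrams. The only (harmless) divergences are in~\ref{m4}, where you iterate the slide down to $r(0,0,1)$ and $r(1,0,1)$ and blow down, whereas the paper cites \cite{gs} directly for the Hopf link with framings $a$ and $0$, and in~\ref{m5}, where you use~\eqref{firstdiffeo} to reach $(0,c-1,-1)$ instead of~\eqref{seconddiffeo} to reach $(c+1,0,1)$.
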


\begin{proof}
First,~\ref{m1} is clear since the Kirby diagrams are two-component unlinks with framings $\pm1$.
By~\eqref{firstdiffeo}, $W^4(r(2,1,1))\approx W^4(r(1,1,0))$, and~\ref{m2} now follows from~\ref{m1}.
Next, $W^4(r(2,1,-1))\approx W^4(r(2,1,1))$ by Remark~\ref{basicdiffeos},
$W^4(r(2,1,-1))\approx W^4(r(5,1,2))$ by~\eqref{firstdiffeo},
and $W^4(r(5,1,2))\approx W^4(r(5,2,3))$ by~\eqref{seconddiffeo}.
So,~\ref{m3} now follows from~\ref{m2}.
The Kirby diagram for $W^4(r(a,0,1)$ is a Hopf link with framings $a$ and $0$;
by~\cite[pp.~127, 130, \& 144]{gs}, the corresponding $4$-manifold may be closed up with a $4$-handle yielding
$S^2\times S^2$ if $a$ is even and $\cp\cs\cpb$ if $a$ is odd.
This proves~\ref{m4}.
Lastly, \eqref{seconddiffeo} gives $W^4(r(c+1,c-1,c))\approx W^4(r(c+1,0,1))$,
and~\ref{m5} now follows from~\ref{m4}.
\end{proof}

\begin{corollary}\label{cor_main}
The closed $4$-manifolds appearing as $X^4(r)$ for an Artin presentation $r$ on $n$-generators
for $n=0$, $1$, and $2$ are exactly:
$S^4$ for $n=0$, $\cp$ and $\cpb$ for $n=1$, and
$\cp\cs\cp$, $\cp\cs\cpb$, $\cpb\cs\cpb$, and $S^2\times S^2$ for $n=2$.
\end{corollary}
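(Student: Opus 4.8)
The plan is to assemble the corollary directly from the results already established, with essentially no new work. For $n=0$, Examples~\ref{APex}(1) give the unique presentation $\varepsilon=\fg{\mid}$, whose manifold $W^4(\varepsilon)=D^4$ and hence $X^4(\varepsilon)=D^4\cup_\partial D^4\approx S^4$; I would cite this from the introduction. For $n=1$, Examples~\ref{APex}(3) classify $\mathcal{R}_1$ as $\fg{x_1\mid x_1^a}$, $a\in\Z$; the group $\pi(r)\cong\Z/\card{a}\Z$ is trivial precisely when $a=\pm1$, and in those cases Figure~\ref{fig:R2diagram} (left) gives a Kirby diagram that is a single unknot with framing $\pm1$, so $X^4(\fg{x_1\mid x_1^{\pm1}})\approx\cp$ or $\cpb$ respectively (already noted in the excerpt). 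Thus the $n=0$ and $n=1$ cases are immediate.

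For $n=2$, the forward direction is the content of Theorems~\ref{tlist} and~\ref{fms}: every $r\in\mathcal{R}_2$ with $\pi(r)$ trivial has its $3$-tuple $(a,b,c)$ on the list of Theorem~\ref{tlist}, and Theorem~\ref{fms} together with Remark~\ref{basicdiffeos} identifies each resulting $X^4(r)$. I would run down the five families: (1) yields $\cp\cs\cp$ and $\cp\cs\cpb$, and applying the orientation reversal~\eqref{inverse} to $r(1,1,0)$ gives $\cpb\cs\cpb$; (2) and (3) yield only $\cp\cs\cp$ (and, via~\eqref{inverse}, $\cpb\cs\cpb$); (4) yields $S^2\times S^2$ and $\cp\cs\cpb$; (5) yields $S^2\times S^2$ and $\cp\cs\cpb$. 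Collecting these, the manifolds that occur are exactly $\cp\cs\cp$, $\cp\cs\cpb$, $\cpb\cs\cpb$, and $S^2\times S^2$, establishing that no others appear.

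The one point requiring a sentence of justification is that all four of these manifolds genuinely \emph{do} appear — i.e.\ that the list is not redundant. This is visible from Theorem~\ref{fms}: $r(1,1,0)$ realizes $\cp\cs\cp$, $r(1,-1,0)$ realizes $\cp\cs\cpb$, $r(2,0,1)$ realizes $S^2\times S^2$, and $r(-1,-1,0)$ realizes $\cpb\cs\cpb$ via~\eqref{inverse} applied to $\cp\cs\cp$. I anticipate no real obstacle here; the only thing to be careful about is bookkeeping the orientation-reversal symmetry~\eqref{inverse} so that $\cpb\cs\cpb$ is correctly included even though it is not literally written in the statement of Theorem~\ref{fms}, whose entries happen to be orientation-reversal-symmetric ($\cp\cs\cpb$ and $S^2\times S^2$) except for $\cp\cs\cp$. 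So the proof reduces to: quote $n=0,1$; quote Theorems~\ref{tlist} and~\ref{fms} for the $n=2$ upper bound; and exhibit one presentation realizing each of the four manifolds for the $n=2$ lower bound.
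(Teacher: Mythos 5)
Your proposal is correct and follows exactly the route the paper intends: the corollary is stated without a separate proof precisely because it is the immediate assembly of the $n=0,1$ observations from the introduction with Theorems~\ref{tlist} and~\ref{fms} and Remark~\ref{basicdiffeos} for $n=2$, which is what you do. Your extra care in checking realizability of all four manifolds (in particular obtaining $\cpb\cs\cpb$ from $r(-1,-1,0)$ via the orientation-reversal~\eqref{inverse}) is exactly the right bookkeeping.
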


\begin{remark}
As noted by a referee, an alternative proof of Corollary~\ref{cor_main} may be obtained using
Corollary~1.4 from Meier and Zupan~\cite{mz}.
Their approach utilizes trisections of 4-manifolds and
does not appear to provide alternative proofs of Theorems~\ref{tlist} and~\ref{fms} herein.
\end{remark}

\end{document}